\newtheorem{corollary}{Corollary}
\newtheorem{lemma}{Lemma}
\newtheorem{proposition}{Proposition}
\newtheorem{remark}{Remark}
\newtheorem{theorem}{Theorem}
\newtheorem{example}{Example}
\numberwithin{equation}{section}
\begin{document}
	\title[constant flag curvature]{On spherically symmetric Finsler metric with scalar and constant flag curvature}
	\author{Newton Sol\'orzano$^{1}$ and Benedito Leandro$^{2}$ }
	
	\address{Universidade Federal da Integra\c{c}\~{a}o Latino-Americana  |  Avenida Silvio Am\'erico Sasdelli, 1842 - Vila A, Edif\'icio Comercial Lorivo | CEP: 85866-000 | Caixa Postal 2044 - Foz do Igua\c{c}u - Paran\'a.}
	\email{newton.chavez@unila.edu.br$^{1}$}
	\address{Universidade de Federal de Goi\'as- UFG, IME, 74690-900, Goi\^ania- GO, Brazil.}
	\email{bleandroneto@ufg.br$^{2}$}
	

	\begin{abstract}
		In this paper we study spherically symmetric metrics on a symmetric space in $\mathbb{R}^n$ with scalar and constant flag curvature and we also obtain families of this type of metrics.  Many explicit examples are provided for Douglas metrics with scalar and constant flag curvature.  Furthermore, new examples of projectively flat Finsler metrics are given. We also provide a family of spherically symmetric Finsler metrics which are not of Douglas type.
	\end{abstract}
	
	\keywords{Finsler metric, Douglas curvature, Flag curvature, spherically symmetric.}   
	\subjclass[2010]{53B40, 53C60, 58B20.}
	\date{\today}

	\maketitle
	\section{Introduction}
	Finsler metrics are more general than Riemannian metrics since the tangent norms do not need to be induced by inner products. Then it is natural to study Riemannian properties in Finsler geometry. The Riemannian sectional curvature in Finsler geometry is given by the flag curvature $K(P,y)$ defined by (cf. \cite{CS2}): \[K(P,y):=\frac{g_y(R_y(u),u)}{g_y(y,y)g_y(u,u)-[g_y(y,u)]^2},\] for a tangent plane $P\subset T_xM $ containing $y$, and $u\in P$ is such that $P=span\{y,u\}$.

	A Finsler metric is said to be of scalar flag curvature if $K(P,y)=K(x,y)$ and is said to be constant flag curvature if the flag curvature $K(P,y)=K$ is constant. 
	Many researchers are interested in classifying and look for Finsler metrics with scalar and constant flag curvature. 
	
	Till now, some progress was made. In \cite{bao2004}, the authors provided a local classification theorem for Randers metrics, while in \cite{LZhou2010} and \cite{ZSheenCivi20018} they studied some class of $(\alpha,\beta)$-metrics with constant flag curvature, and in \cite{Xia2017} the author obtained some non-projectively flat Finsler metrics of scalar (resp. constant) flag curvature for some class of general $(\alpha, \beta)-$metrics.

	The Hilbert's Fourth problem claims to classify metrics with the property that their geodesics are straight lines. In Finsler Geometry this problem is equivalent to looking for projectivelly flat Finsler Metrics in an open domain $U\subset R^n.$  A Finsler metric $F$ is called Locally Projectively flat if at any point $p\in U$, there is a local coordinate system $(x^i)$ in which $F$ is projectively flat.
	
	For Riemannian metrics, Hilbert's fourth problem is solved by the Beltrami's theorem, which states that a Riemannian metric is locally projectively flat if and only if it is of constant sectional curvature. It is well known that every locally projectively flat Finsler metric has scalar flag curvature. However, the opposite is not necessarily true. There are Finsler metrics of scalar flag curvature, which are not locally projectively flat (cf. \cite{bao2004}). In \cite{Xia2017}, the author showed that a class of general $(\alpha, \beta)-$metric has scalar flag curvature if and only if it is locally projectively flat.
	
	The Douglas curvature introduced by J. Douglas in 1927 \cite{D} is very important in Finsler geometry because it is a projectively invariant. Namely, if two Finsler metrics $F$ and $\overline{F}$ are projectively equivalent, then $F$ and $\overline{F}$ have the same Douglas curvature.  Finsler metrics with vanishing Douglas curvature are called Douglas metrics. The class of Douglas metrics contains all Riemannian metrics and the locally projectively flat Finsler metrics.  However, there are many Douglas metrics which are not Riemannian. There are also many Douglas metrics which are not locally projectively flat \cite{MY}. 
	
	
	There are many important examples of Finsler metrics such as Funk Metric, Bryant's metrics, Chern-Shen's metric, which satisfy \[F(Ax,Ay)=F(x,y)\]
	for all $A\in O(n).$  This type of metrics is called spherically symmetric and was first studied by Rutz \cite{Rutz1996}. It can be expressed by (cf. \cite{HM1,HM2,Z}):
	\begin{align}\label{finmet}
	F(x,y)=|y|\phi(|x|,\dfrac{\langle x,y\rangle}{|y|}), 
	\end{align}
	where $\phi(r,s)$ is a positive real function such that
	\begin{align*}\label{defrs}
	r:=|x|,\quad s:=\frac{\langle
		x,\,y\rangle}{|y|}.
	\end{align*}
	Here, $|\cdot|$ and $\langle\cdot\,,\,\cdot\rangle$ are the Euclidean norm and inner product respectively. Note that the spherically symmetric Finsler metrics are a class of the general $(\alpha, \beta)-$metrics.
	
	The geodesic coefficients of spherically symmetric Finsler metric are given by (cf. \cite{HM1,YZ}):
	\[G^i=|y|Py^i + |y|^2Qx^i,\]
	where 
	\begin{align}\label{defQ}
	Q:=\frac
	1{2r}\frac{r\phi_{ss}-\phi_r+s\phi_{rs}}{\phi-s\phi_s+(r^2-s^2)\phi_{ss}},\quad
	\quad r:=|x|,\quad s:=\frac{\langle
		x,\,y\rangle}{|y|}
	\end{align}
	and
	\begin{align}\label{defP}
	P:=\frac{r\phi_s+s\phi_r}{2r\phi}-\frac
	Q{\phi}\left[s\phi+(r^2-s^2)\phi_s\right].
	\end{align}
	
	It is important to point out that Bryant et al. \cite{Bryant2017} provided a complete study of  spherically symmetric Finsler surfaces of constant flag curvature. Moreover, in \cite{SEZShen2015} the authors improved the results obtained in \cite{2012arXiv1202.4543M} for metrics in $\mathbb{R}^n$, and then they proved some new examples of spherically symmetric Finsler metrics with constant flag curvature. 
	
	To state our results, we introduce the following notation.
	For a positive smooth function $\phi=\phi(r,s),$  let 
	\begin{align}\label{R2}
	R_1&:= P^2-\frac{1}{r}(sP_r+rP_s) + 2Q[1+sP+(r^2-s^2)P_s],\nonumber\\
	R_2&:= 2Q(2Q-sQ_s)+\frac{1}{r}(2Q_r-sQ_{rs}-rQ_{ss})+(r^2-s^2)(2QQ_{ss}-Q_s^2),\\
	R_3&:=\frac{1}{r}\left(\left[(r^2-s^2)2Q-1\right]rP_{ss}-sP_{rs}+P_r+r2Q(P-sP_s)\right),\nonumber
	\end{align}
	where $Q=Q(r,s)$ and $P=P(r,s)$ are given by (\ref{defQ}) and (\ref{defP}), respectively.

	In this paper we study  the system of partial differential equations obtained in \cite{SEZShen2015}, \cite{2012arXiv1202.4543M} that describes spherically symmetric Finsler metrics with scalar and constant flag curvature. Contrasting to results obtained in \cite{MANA:MANA201400124,2012arXiv1202.4543M,HM1}, which studied Douglas metrics and obtained examples, we provide a family of functions that can define Finsler metrics with scalar flag curvature, but not of Douglas type. Furthermore, we obtain new examples of Douglas metrics with scalar curvature $K(x,y)$ and consequently examples of projectively flat Finsler metrics (see Section \ref{section4} right down below).

	Also, the classification of Douglas metrics with constant flag curvature (or locally projectively flat) that we present suggests a method able to build new examples (see Remark before the Theorem \ref{theo4}).

	\vspace{0.5cm}

	\
	In Section 3 we prove a classification for some families of Finsler metrics (not Douglas) with scalar flag curvature (see Theorem \ref{theonot}). Moreover, we provide a classification for the Douglas metrics with scalar flag curvature (cf. Theorem \ref{teo1}) and with constant flag curvature in Theorem \ref{theo3}. It is important to emphasize that we use the characteristic method to prove our main results.
	
	\
	Let us establish some additional notation before announcing the main results. For some $g=g(r)$ differentiable function, we define:
	\begin{align}\label{T}
	T(r):=\frac{e^{-\int\frac{4rg(r)}{1-2r^2g(r)}dr}}{(1-2r^2g(r))^2} \quad \text{ and } \quad \overline{T}(r)=4\int\frac{g'(r)+2rg^2(r)}{1-2r^3g(r)}T(r)dr.
	\end{align}
	Here, $M_s^n$ denotes one of the following symmetric subsets of $\mathbb{R}^n$: $\mathbb{B}(\nu)=\left\{x\in \mathbb{R}^n\,; \,|x|<\nu\right\},$  $\mathbb{B}(\nu_1) \hspace{0.5mm}\backslash \hspace{0.5mm}\mathbb{B}(\nu_2)$ or $\mathbb{R}^n,$ where $0<\nu<+\infty$ and $0\leq\nu_2<\nu_{1}$. 
	
	Our first theorem provides a family of Finsler metrics (non Douglas) which are of scalar flag curvature.
	
	\begin{theorem}\label{theonot}
		A spherically symmetric Finsler metric satisfying \eqref{finmet} in which 
		\begin{align}\label{phinondouglas}
		\phi(r,s)=sh(r)-s\int \frac{\eta(\overline{\varphi}(r,\,s))}{s^2\sqrt{r^2-s^2}}ds,
		\end{align} 
		where $\eta,\,h$ are arbitrary differential real functions, $k$ constant and 
		\begin{align}\label{varphi2}
		\overline{\varphi}(r,s)=\frac{r^{2(k^2+1)}(r^2-s^2)e^{2k\arctan\left({k-\frac{(1+k^2)s}{\sqrt{r^2-s^2}}}\right)}}{k^2s^2-2ks\sqrt{r^2-s^2}+r^2},
		\end{align}  is of scalar flag curvature. Moreover, when $k=0$, then $F=|y|\phi(r,s)$ is projectively flat.
	\end{theorem}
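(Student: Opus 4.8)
The starting point is the description of scalar flag curvature for spherically symmetric metrics obtained in \cite{SEZShen2015, 2012arXiv1202.4543M}. Computing the Riemann curvature of the spray $G^i=|y|Py^i+|y|^2Qx^i$ and decomposing it into the tensorial pieces built from $\delta^i_k$, $x^i$ and $y^i$, one sees that $F$ is of scalar flag curvature exactly when the part of the curvature that genuinely depends on the flag (and not only on $(x,y)$) vanishes. In the notation of \eqref{R2} this is encoded by $R_2$ and $R_3$, the scalar value $K=K(x,y)$ being carried by $R_1$; thus the task is to produce $\phi$ for which these flag-dependent terms vanish identically, with $P$ and $Q$ determined from $\phi$ through \eqref{defQ} and \eqref{defP}.

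First I would insert the ansatz \eqref{phinondouglas} into \eqref{defQ} and \eqref{defP}. Writing $\phi=s\psi$ with $\psi=h(r)-\int \eta(\overline\varphi)\,[\,s^2\sqrt{r^2-s^2}\,]^{-1}\,ds$, the numerator of $Q$ in \eqref{defQ} becomes $2r\psi_s+rs\psi_{ss}+s^2\psi_{rs}$, so the entire ansatz is governed by $\psi_s=-\eta(\overline\varphi)/(s^2\sqrt{r^2-s^2})$ together with the function $\overline\varphi$ of \eqref{varphi2}. The key structural fact -- and the content of the characteristic method mentioned in the introduction -- is that $\overline\varphi$ is a first integral of the characteristic vector field $L=A\,\partial_r+B\,\partial_s$ attached to the reduced scalar-flag-curvature equation, i.e. $L\overline\varphi=0$. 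Consequently $L\big(\eta(\overline\varphi)\big)=0$ for every choice of $\eta$, which is precisely what makes the flag-dependent curvature vanish. In this picture the constant $k$ is the integration constant of the characteristic ODE (it reparametrizes the level curves $\overline\varphi=\text{const}$), $\eta$ is the arbitrary function produced by the general solution of the reduced equation, and $h(r)$ is the homogeneous term that appears when $\phi$ is recovered from $Q$ by integrating \eqref{defQ} in $s$.

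Having fixed this structure, I would substitute the resulting $P$ and $Q$ into $R_2$ and $R_3$ and verify that both vanish. The vanishing of $R_2$ should reduce to $L\overline\varphi=0$, while the vanishing of $R_3$ should follow after rewriting the $P$-derivatives by means of \eqref{defP} and the relations that $\phi$ inherits from $Q$. I expect the genuine difficulty to lie here: $R_2$ and $R_3$ are nonlinear in $Q$ and involve second derivatives of both $P$ and $Q$, so the cancellations must be organized rather than checked term by term. The clean way to do this is to verify $L\overline\varphi=0$ once and for all by differentiating \eqref{varphi2} -- tracking how $\partial_r$ and $\partial_s$ act on the $\arctan$ factor, the powers of $r$, and the quadratic denominator -- after which the cancellations in $R_2$ and $R_3$ are forced by the characteristic structure instead of being accidental.

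Finally, the case $k=0$ is a direct check. Specializing \eqref{varphi2} gives $\overline\varphi(r,s)=r^2-s^2$, so $\psi_s=-\eta(r^2-s^2)/(s^2\sqrt{r^2-s^2})$; computing $\psi_{ss}$ and $\psi_{rs}$ and substituting, the numerator $2r\psi_s+rs\psi_{ss}+s^2\psi_{rs}$ of $Q$ collapses to $-2r(r^2-s^2)\eta+2r(r^2-s^2)\eta=0$. Hence $Q\equiv0$, so $G^i=|y|Py^i$, the geodesics of $F$ are straight lines, and $F$ is projectively flat. This step is routine and also serves as a consistency check, since projective flatness implies scalar flag curvature and therefore recovers the conclusion of \thmref{theonot} in the limiting case.
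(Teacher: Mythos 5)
There is a genuine gap, and it sits at the very first step: you take ``scalar flag curvature'' to be encoded by the simultaneous vanishing of $R_2$ \emph{and} $R_3$, and you propose to verify that both vanish for the ansatz \eqref{phinondouglas}. But $R_2=R_3=0$ is the characterization of \emph{constant} flag curvature (Theorem \ref{theoersa}, from \cite{SEZShen2015}); scalar flag curvature on $M_s^n$ with $n\geq 3$ is characterized by the single equation $R_2=0$ of Lemma \ref{lemmascalarK}, which involves only $Q$ and not $P$. This distinction is not cosmetic: $R_3$ depends on $P$, hence on the arbitrary functions $h$ and $\eta$, and it does \emph{not} vanish for generic choices --- if it did, every metric of Theorem \ref{theonot} would automatically have constant flag curvature, and the separate treatment in Theorem \ref{theo3} (with the extra compatibility condition \eqref{condU}) as well as the later examples, where $h$ must be tuned specifically to force $R_3=0$, would be vacuous. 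So the claimed ``cancellations in $R_3$ forced by the characteristic structure'' is not a mechanism that exists; an attempt to carry out your step would fail at exactly that point, whereas the paper never needs $R_3$ at all in this proof.

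The second, related gap is that you never determine $Q$, and the logical order of your argument inverts the paper's. The paper first differentiates \eqref{R_2=0} to obtain a transport equation satisfied by $(r^2-s^2)^{3/2}(Q_s-sQ_{ss})$, takes the particular solution in which this quantity equals the constant $k$, integrates to get $Q$ explicitly, and substitutes back into \eqref{R_2=0} to pin down the coefficient of $s^2$; with $c_2=0$ and $k\neq 0$ this produces a concrete non-Douglas $Q$ that satisfies $R_2=0$ \emph{by construction}. Only then does the characteristic method enter: equation \eqref{defQ} with this prescribed $Q$ becomes a linear transport equation for $\psi=\sqrt{r^2-s^2}(\phi-s\phi_s)$, whose general solution is $\eta(\overline{\varphi})$ with $\overline{\varphi}$ as in \eqref{varphi2}, and integrating $\phi-s\phi_s=\eta(\overline{\varphi})/\sqrt{r^2-s^2}$ yields \eqref{phinondouglas}. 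Thus $\overline{\varphi}$ is a first integral of the equation that \emph{recovers $\phi$ from a prescribed $Q$}, not of a ``reduced scalar-flag-curvature equation,'' and $k$ is not the integration constant of the characteristic ODE (that role is played by $k_1$); rather, $k$ quantifies the failure of $Q$ to be quadratic in $s$, i.e.\ the failure of the Douglas condition \eqref{1.5} --- which is precisely why the family is non-Douglas for $k\neq0$. Your final check is sound: for $k=0$ one gets $\overline{\varphi}=r^2-s^2$, the numerator $2r\psi_s+rs\psi_{ss}+s^2\psi_{rs}$ of $Q$ vanishes identically, so $Q\equiv0$ and $F$ is projectively flat, in agreement with the paper.
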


	\begin{theorem}\label{teo1}
		Any spherically symmetric Douglas metric with scalar flag curvature on $M_s^n$ ($n\geq 3$) is given by \eqref{finmet},
		where 
		\begin{align}\label{phiscalar}
		\phi(r,s)=s\left(h(r)-\int \frac{\eta(\varphi(r,s))}{s^2\sqrt{r^2-s^2}}ds\right)\quad\mbox{and}\quad\varphi(r,s)=\frac{r^2-s^2}{(r^2-s^2)\overline{T}(r) -T(r)},
		\end{align}
		with $\phi$ satisfying \eqref{R2}. Here $h$ and $\eta$ are arbitrary differentiable real functions such that
		\begin{align}\label{etateo2}
		\frac{-\sqrt{r^2-s^2}}{s}\frac{\partial \eta}{\partial s}>0\quad\mbox{and}\quad 
		\frac{\eta}{\sqrt{r^2-s^2}}> 0.
		\end{align}
	\end{theorem}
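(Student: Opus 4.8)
The plan is to combine the two structural hypotheses---being of Douglas type and having scalar flag curvature---into an over-determined PDE system for $\phi(r,s)$, and then to integrate it by the method of characteristics, exactly as announced in the Introduction. First I would recall from \cite{HM1,2012arXiv1202.4543M} the characterization of the Douglas property for a spherically symmetric metric \eqref{finmet}: since the geodesic coefficients are $G^i=|y|Py^i+|y|^2Qx^i$ and the $y^i$-term is projectively irrelevant, vanishing of the Douglas curvature is equivalent to a single PDE built only from $Q=Q(r,s)$ in \eqref{defQ}. Likewise, from \cite{SEZShen2015,2012arXiv1202.4543M} the scalar flag curvature hypothesis is the requirement that the Riemann curvature of $F$ take the scalar form $K=K(x,y)$; written in terms of $P$ and $Q$ this is precisely a relation among the three quantities $R_1,R_2,R_3$ of \eqref{R2} (the angular pieces must vanish, leaving $R_1$ as the scalar factor). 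These two PDEs are the input.

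Next I would solve the Douglas equation in $Q$ alone. Treating $r$ as a parameter and $s$ as the active variable, this is a first-order equation along characteristic curves; integrating it with a linear integrating factor produces the functions $T(r)$ and $\overline{T}(r)$ of \eqref{T}, in which the arbitrary function $g=g(r)$ arises exactly as the constant of integration in the $r$-direction. The upshot is that every Douglas $Q$ is a function of the single characteristic variable $\varphi(r,s)=\dfrac{r^2-s^2}{(r^2-s^2)\overline{T}(r)-T(r)}$ appearing in \eqref{phiscalar}; equivalently, the level sets $\{\varphi=\mathrm{const}\}$ are the characteristics of the Douglas operator.

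With $Q$ reduced to this one-variable form, I would substitute it into the scalar-flag-curvature relations among $R_1,R_2,R_3$ and verify their compatibility---this fixes $P$ through \eqref{defP} and shows the system is not over-determined. It then remains to recover $\phi$ itself. Relation \eqref{defQ} is a second-order linear PDE for $\phi$ in $s$, with $r$ as a parameter; because the characteristic variable is already $\varphi$, its general solution splits into the linear-in-$s$ term $sh(r)$ (the kernel of the operator, contributing the free radial function $h$) plus a term carrying an arbitrary function $\eta$ of $\varphi$, integrated against the weight $1/(s^2\sqrt{r^2-s^2})$. This is precisely the expression \eqref{phiscalar}.

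Finally I would translate admissibility of $F$ into the inequalities \eqref{etateo2}: positivity of $\phi$ together with positivity of the denominator $\phi-s\phi_s+(r^2-s^2)\phi_{ss}$ in \eqref{defQ} (strong convexity) forces $\dfrac{\eta}{\sqrt{r^2-s^2}}>0$ and $-\dfrac{\sqrt{r^2-s^2}}{s}\,\dfrac{\partial\eta}{\partial s}>0$. The main obstacle is the explicit integration of the previous two paragraphs: one must identify the correct integrating factor so that the characteristic variable emerges in the closed form of \eqref{phiscalar}, and then confirm that imposing scalar flag curvature on top of the Douglas form still leaves a genuine, non-empty family rather than collapsing it. The hypothesis $n\geq 3$ enters here, since the curvature identities used to reduce the full Riemann curvature to the three scalars $R_1,R_2,R_3$ degenerate when $n=2$; the surface case is instead governed by Bryant et al. \cite{Bryant2017}.
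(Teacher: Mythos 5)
Your global plan --- reduce $Q$ by the Douglas hypothesis, impose the scalar-curvature PDE, solve by characteristics, then translate admissibility into \eqref{etateo2} --- is indeed the paper's plan, but two of your intermediate steps are wrong as stated, and they are exactly where the content of the theorem lives. First, the Douglas condition alone does \emph{not} produce $T$ and $\overline{T}$, and it is false that ``every Douglas $Q$ is a function of the single characteristic variable $\varphi$'': by \eqref{1.5} (from \cite{MoSoTe1}), Douglas type means $Q = g(r) + \frac{s^2}{2}f(r)$ with $f$ and $g$ two \emph{independent} arbitrary functions of $r$, so $Q$ remains quadratic in $s$ and carries no invariant $\varphi$. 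The closed-form $\varphi$ with the specific $T(r)$, $\overline{T}(r)$ of \eqref{T} (which depend on $g$ alone) exists only after the scalar-curvature hypothesis ties $f$ to $g$: for $n \geq 3$, scalar flag curvature is equivalent to the \emph{single} equation $R_2=0$ of Lemma \ref{lemmascalarK} (via Matsumoto's Weyl criterion), and substituting \eqref{1.5} into \eqref{R_2=0} forces $g \neq \frac{1}{2r^2}$ and $f = \frac{2g' + 4rg^2}{r - 2r^3 g}$. Your gloss that the scalar hypothesis makes ``the angular pieces'' of $R_1,R_2,R_3$ vanish is also off target: $R_3=0$ is the \emph{constant}-curvature condition (Theorem \ref{theo3}), so imposing it here would prove a different, stronger statement; and the aside that compatibility ``fixes $P$'' is idle, since $P$ is automatically determined by $\phi$ through \eqref{defP} and plays no role in this theorem.

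Second, the recovery of $\phi$ cannot be run ``in $s$, with $r$ as a parameter'': the equation \eqref{eqdouglas} obtained by equating \eqref{defQ} with the Douglas form contains $\phi_r$ and $\phi_{rs}$, so an ODE-in-$s$ viewpoint can never produce the genuinely two-variable invariant $\varphi(r,s)$. The missing device is the substitution $\psi(r,s) = \sqrt{r^2-s^2}\,\bigl(\phi - s\phi_s\bigr)$, which collapses \eqref{eqdouglas} to the first-order transport equation \eqref{eqtransp}; Proposition \ref{propEdp} then gives $\psi = \eta(\varphi)$, and since $\phi - s\phi_s = -s^2\left(\frac{\phi}{s}\right)_s$, a single quadrature yields \eqref{phiscalar}, with $sh(r)$ as the kernel term exactly as you guessed. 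Finally, the first inequality in \eqref{etateo2} does not come from positivity of $\phi$: for $n\geq 3$ the Yu--Zhu criterion requires $\phi - s\phi_s > 0$, which is $\frac{\eta}{\sqrt{r^2-s^2}}>0$, while differentiating $\psi = \eta(\varphi)$ in $s$ gives $\phi - s\phi_s + (r^2-s^2)\phi_{ss} = -\frac{\sqrt{r^2-s^2}}{s}\,\eta_s$, so the strong-convexity condition is the other inequality. Your placement of the hypothesis $n \geq 3$ is essentially correct (every Finsler surface is of scalar flag curvature, so the Weyl characterization degenerates at $n=2$), but note that $n\geq 3$ is also used in this last admissibility step.
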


	\
	
	In \cite{2012arXiv1202.4543M} Mo and Zhou proved that a Finsler metric satisfying  \eqref{finmet} is of constant flag curvature  $K$ if, and only if, $R_1=K\phi^2$ and $R_2=R_3=0$. Then, Sevim et al. developed these result (see Theorem 4), and showed that these three equations can be reduced to $R_2=R_3=0$ (cf. \cite{SEZShen2015}).
	
	\
	
	Let $g(r),$ $\overline{h}(r)$ and $\overline{\eta}(\varphi)$ be smooth real functions, considering $R_2=R_3=0$ we can express $P=P(r,s)$ and $Q=Q(r,s)$ by (see the proof of Theorem \ref{theo3}):
	\begin{align}\label{Hsh}
	Q(r,s)=g(r)+\frac{g'(r)+2rg^2(r)}{r-2r^3g(r)}s^2\quad\mbox{and}\quad
	P(r,s) =s \left( \overline{h}(r) - \int{\frac{\overline{\eta}(\varphi(r,s))}{s^2\sqrt{r^2-s^2}} ds}\bigr.\right),
	\end{align} where 
	\begin{align*}
	\varphi(r,s)=\frac{r^2-s^2}{(r^2-s^2)\overline{T}(r)-T(r)}.
	\end{align*}
	Here $T(r)$ and $\overline{T}(r)$ are given by (\ref{T}).
	\begin{remark}\label{remark01}
		Note that $P$ given by \eqref{Hsh} is equal to $\phi$ in (\ref{phiscalar}) because these two functions must satisfy the same equation (see the proof of Theorem \ref{theo3}).
	\end{remark}
	
	Moreover, we define $U=U(r,s)$ by 
	\begin{align}\label{conditionPQ}
	U(r,s)=&\frac{(r^2-s^2)(sP_s-2P)-s(1+sP)}{(r^2-s^2)(2Q-sQ_s)-sP-1}.
	\end{align}
	This function satisfies the following identity: 
	\begin{eqnarray}\label{condU}
	s\left[sU_r+(1-(r^2-s^2)2Q)rU_s\right]&=&r\left[1+(r^2-s^2)2(sQ_s-Q)\right]U\nonumber\\
	&&+2r(r^2-s^2)(sP_s-P).
	\end{eqnarray} 
	
	Inspired by \cite{SEZShen2015} and \cite{2012arXiv1202.4543M}, we get the next result, which ensures a new method to obtain the results provided in \cite{Xia2017}.

	\begin{theorem}\label{theo3}
		Any spherically symmetric Douglas metric with constant flag curvature on $M_s^n$ ($n\geq 3$) is given by 	\eqref{finmet}, where
		\begin{align}\label{phiP}
		\phi(r,s)=&\sqrt{r^2-s^2} \operatorname{exp}\left({\int\frac{U(r,s)}{r^2-s^2}ds}\right)
		\end{align}
		such that $U=U(r,s)$ satisfies the condition (\ref{condU}),  
		\begin{align}\label{fins1}
		\phi(r,s)-s\phi_s(r,s)>0 
		\quad\mbox{and}\quad
		\phi(r,s)-s\phi_s(r,s) + (r^2-s^2)\phi_{ss}(r,s)> 0.
		\end{align}
	\end{theorem}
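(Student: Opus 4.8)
The plan is to pin down the geodesic data $(P,Q)$ from the hypotheses and then integrate back to recover $\phi$. First I would invoke the reduction of Sevim et al., by which constant flag curvature $K$ is equivalent to the two equations $R_2=0$ and $R_3=0$, the remaining equation $R_1=K\phi^2$ being automatic. Next I would use the Douglas hypothesis applied to $G^i=|y|Py^i+|y|^2Qx^i$: the Douglas tensor vanishes exactly when $|y|^2Qx^i$ is a quadratic form in $y$, the summand $|y|Py^i$ being automatically of Douglas type since $|y|P$ is positively homogeneous of degree one in $y$. Because $|y|^2s^{m}=\langle x,y\rangle^{m}|y|^{2-m}$ is polynomial in $y$ only for $m=0$ and $m=2$, this forces $Q=g(r)+b(r)s^2$.

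Substituting $Q=g+bs^2$ into $R_2=0$ gives $Q_s=2bs$ and $Q_{ss}=2b$, and a short computation collapses $R_2$ to $4g^2+\tfrac{2g'}{r}-2b(1-2r^2g)$; setting this equal to zero yields $b=\dfrac{g'+2rg^2}{r-2r^3g}$, which is precisely the $Q$ displayed in \eqref{Hsh}. With $Q$ fixed, $R_3=0$ becomes a linear partial differential equation for $P$; solving it by the method of characteristics (whose integrating factors are the functions $T$ and $\overline{T}$ of \eqref{T} and whose first integral is the quantity $\varphi$) produces the expression for $P$ in \eqref{Hsh}, which by Remark \ref{remark01} is governed by the same equation as the scalar-curvature profile \eqref{phiscalar}.

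The heart of the argument is the reconstruction of $\phi$ from $(P,Q)$. Writing $A:=s\phi+(r^2-s^2)\phi_s$, $B:=r\phi_s+s\phi_r$, $C:=r\phi_{ss}-\phi_r+s\phi_{rs}$ and $D:=\phi-s\phi_s+(r^2-s^2)\phi_{ss}$, the definitions \eqref{defQ} and \eqref{defP} read $Q=\tfrac{C}{2rD}$ and $2r\phi P=B-2rQA$, while $A_s=D$. I would then establish the algebraic identity that the quotient defining $U$ in \eqref{conditionPQ} equals $A/\phi$: differentiating $2r\phi P=B-2rQA$ in $s$ and using $B_s=C+2\phi_r$ together with $C=2rDQ$ gives $P_s\phi=\tfrac{\phi_r}{r}-Q_sA-P\phi_s$, and feeding this into the numerator and denominator of \eqref{conditionPQ} and clearing $\phi$, the $Q_s$-terms cancel while the remainder telescopes, via $2r\phi P=B-2rQA$ and $A=s\phi+(r^2-s^2)\phi_s$, down to $-(r^2-s^2)\phi_s-s\phi+A=0$. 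Hence $U\phi=A$, so $\tfrac{\phi_s}{\phi}=\tfrac{U-s}{r^2-s^2}$, and integrating in $s$, with $\int\tfrac{-s}{r^2-s^2}\,ds=\tfrac12\ln(r^2-s^2)$, delivers exactly the representation \eqref{phiP}.

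Finally I would confirm that this $U$ obeys the transport relation \eqref{condU} and that $F$ is a genuine metric. Differentiating $U\phi=A$ gives $U_s\phi=D-U\phi_s$ and $U_r\phi=A_r-U\phi_r$ with $A_r=s\phi_r+2r\phi_s+(r^2-s^2)\phi_{rs}$; substituting these together with $P_s\phi=\tfrac{\phi_r}{r}-Q_sA-P\phi_s$ into \eqref{condU}, the third-order contributions enter only through $Q_s$ and cancel in pairs: with $U=A/\phi$ the term $2r(r^2-s^2)sQ_sU$ equals $2r(r^2-s^2)sQ_sA/\phi$, which is exactly the $Q_s$-part of $2r(r^2-s^2)(sP_s-P)$ with the opposite sign, so that \eqref{condU} reduces to an identity. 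The regularity of $F=|y|\phi$ on $M_s^n$ for $n\geq 3$ is then exactly the strong-convexity pair $\phi-s\phi_s>0$ and $\phi-s\phi_s+(r^2-s^2)\phi_{ss}>0$ of \eqref{fins1}, i.e. $\phi-s\phi_s>0$ and $D>0$, which simultaneously keeps the denominators of $Q$ and of $U$ away from zero so that $P$, $Q$ and $U$ are well defined. I expect the principal difficulty to lie in the characteristic integration yielding the closed form of $P$ (hence of $\varphi$, $T$ and $\overline{T}$) and in controlling the lengthy cancellations behind the identities $U=A/\phi$ and \eqref{condU}, where the third-order derivatives of $\phi$ must be shown to drop out.
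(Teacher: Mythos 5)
Your proposal is correct and follows essentially the same route as the paper's proof: the Sevim--Shen--Ulgen reduction to $R_2=R_3=0$, the Douglas condition forcing $Q=g(r)+b(r)s^2$ with $b=\frac{g'+2rg^2}{r-2r^3g}$ from $R_2=0$, the characteristic method for $P$ from $R_3=0$, and the Mo--Zhou reconstruction of $\phi$ through $U$, with \eqref{condU} as the integrability condition and \eqref{phiP} obtained by integrating $(\ln\phi)_s=\frac{U-s}{r^2-s^2}$, plus the Yu--Zhu criterion for \eqref{fins1}. The only difference is cosmetic and directional: the paper defines $U:=\frac{s\phi+(r^2-s^2)\phi_s}{\phi}$ and $W:=\frac{s\phi_r+r\phi_s}{\phi}$, substitutes into \eqref{defQ}--\eqref{defP}, and inverts the resulting algebraic system to arrive at \eqref{conditionPQ} (deriving \eqref{condU} as $(\ln\phi)_{rs}=(\ln\phi)_{sr}$ for the system \eqref{4.9}), whereas you take \eqref{conditionPQ} as the definition and verify the identity $U\phi=s\phi+(r^2-s^2)\phi_s$ directly --- your telescoping to $-(r^2-s^2)\phi_s-s\phi+A=0$ checks out and is exactly the paper's inversion read backwards.
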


	\
	
	Theorem \ref{theo3} gives us a method to provide examples of Finsler metrics with constant flag curvature: 1) Designate $\overline{h}(r),$ $g(r)$ and $\overline{\eta}(\varphi),$ and establish $P$ and $Q$ by \eqref{Hsh}. 2) Prove that (\ref{condU}) holds (or provides conditions for the existence of the three functions in step 1 using  (\ref{condU})). 3) Define $\phi(r,s)$ by
	\eqref{phiP}
	and then verify that \eqref{finmet} is a Finsler metric (it is an example of Douglas metric with constant flag curvature). 4) Make sure that $P$ and $\phi$ satisfy the same equation (see Remark \ref{remark01}), then define $P(r,s):=\phi(r,s)$ , where $\phi(r,s)$ is given in step 3. 5) Repeat steps 2-4. We will use this to construct an example further on (see Example \ref{ex10}).

	The following theorem prove the curvature for the metric \eqref{finmet} presented in Theorem \ref{theo3}.
	\begin{theorem}\label{theo4}
		The curvature $K$ of the metric given in Theorem \ref{theo3} is given by 
		\[K=\frac{R_1}{\phi^2}.\]
	\end{theorem}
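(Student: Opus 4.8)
The plan is to obtain the formula directly from the characterization of constant flag curvature for spherically symmetric metrics recalled just before Theorem~\ref{theo3}, specialized to the metric constructed there. Concretely, I would invoke the equivalence of Mo and Zhou \cite{2012arXiv1202.4543M}: a metric of the form \eqref{finmet} has constant flag curvature $K$ if and only if the three scalars of \eqref{R2} satisfy $R_1=K\phi^2$ and $R_2=R_3=0$. Since Theorem~\ref{theo3} asserts that the metric it produces is of constant flag curvature, all three equations hold for it; in particular $R_1=K\phi^2$. Because $F=|y|\phi$ is a genuine (positive) Finsler metric we have $\phi>0$, so dividing by $\phi^2$ is legitimate and yields $K=R_1/\phi^2$, which is the claim.

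To make this argument non-vacuous I would verify that the data of Theorem~\ref{theo3} really do force $R_2=R_3=0$. The coefficient $Q$ in \eqref{Hsh} is the general solution of $R_2=0$, while $\phi$ is built from a function $U$ satisfying the transport identity \eqref{condU}, which is precisely the condition equivalent to $R_3=0$. Here I would use Remark~\ref{remark01}, namely $P=\phi$, to substitute $P\mapsto\phi$ throughout \eqref{R2} and \eqref{conditionPQ}, so that $R_1$, $R_2$ and $R_3$ become expressions in $\phi$ and $Q$ alone; evaluating $R_1$ on these data then exhibits the constant value $K=R_1/\phi^2$.

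The main obstacle is this verification step. One must propagate the identity $P=\phi$ through the derivatives $P_r$, $P_s$, $P_{ss}$, $Q_s$, $Q_{ss}$ appearing in $R_2$ and $R_3$, and confirm that the first-order relation defining $\phi$ in \eqref{phiP}, together with \eqref{condU}, is exactly the integrability condition that makes $R_3$ vanish. Once $R_2=R_3=0$ is secured, the identification $K=R_1/\phi^2$ is immediate from the Mo--Zhou equivalence, and the explicit evaluation of $R_1$ from \eqref{R2} with $P=\phi$ and $Q$ as in \eqref{Hsh} is a routine, if lengthy, calculation.
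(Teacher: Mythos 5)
Your first paragraph is exactly the argument the paper intends: Theorem~\ref{theo4} is stated without a separate written proof, and it is indeed an immediate consequence of Theorem~\ref{theoersa} (the construction in Theorem~\ref{theo3} enforces $R_2=R_3=0$, hence $F$ has constant flag curvature $K$) combined with the Mo--Zhou lemma, whose condition \eqref{condR1} gives $R_1=K\phi^2$; positivity of $\phi$ then yields $K=R_1/\phi^2$. On that skeleton your proposal and the paper coincide.

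However, your proposed verification mechanism rests on a misreading of Remark~\ref{remark01}, and as described it would fail. The remark does not assert that $P$ and $\phi$ are the same function attached to the metric: for $F=|y|\phi$ the projective factor $P$ is determined by $\phi$ through \eqref{defP}, which is certainly not the identity $P=\phi$. What the remark says is that $P$ (obtained from $R_3=0$) and $\phi$ (obtained from the Douglas equation in Theorem~\ref{teo1}) satisfy the same transport equation, so their general solutions have the same \emph{form}, \eqref{Hsh} and \eqref{phiscalar}, with independent free functions; the assignment $P:=\phi$ occurs only in step 4 of the iterative method for manufacturing new examples after Theorem~\ref{theo3}, not as an identity valid for the metric of that theorem. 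Substituting $P\mapsto\phi$ throughout \eqref{R2} and \eqref{conditionPQ} therefore computes the wrong quantities, and ``evaluating $R_1$ on these data'' would not exhibit the flag curvature. The correct way to secure $R_2=R_3=0$ is the one already contained in the paper's proof of Theorem~\ref{theo3}: the Douglas form of $Q$ in \eqref{Hsh} solves $R_2=0$; $P$ in \eqref{Hsh} solves the transport form of $R_3=0$ via Proposition~\ref{propEdp}; and \eqref{condU} is precisely the integrability condition $(\ln\phi)_{rs}=(\ln\phi)_{sr}$ guaranteeing that the $\phi$ of \eqref{phiP} has these $P$ and $Q$ as its geodesic data \eqref{defQ}--\eqref{defP}. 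Once your substitution step is excised and replaced by this observation, the rest of your argument is complete and agrees with the paper.
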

	%

	Note that if $K\neq 0$, by Theorem 2, Theorem 4 and Lemma 5 (see below), we have another way to calculate $\phi(r,s)$.
	\begin{corollary}
		Let $F=|y|\phi(r,s)$ be a spherically symmetric Douglas metric on $M_s^n\subset \mathbb{R}^n$ ($n\geq 3$) with constant flag curvature $K\neq 0$, then \[K\phi^2=P^2-\frac{1}{r}(sP_r+rP_s) + 2Q[1+sP+(r^2-s^2)P_s]\]
		where $P=P(r,s)$ and $Q=Q(r,s)$ are given by Theorem \ref{theo3}.
	\end{corollary}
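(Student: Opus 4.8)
The plan is to read the identity off directly from the curvature characterization already recorded in the paper, so the corollary becomes essentially a one-line consequence of \thmref{theo4}. Recall that, by the definition \eqref{R2}, the quantity
\[
R_1 = P^2-\frac{1}{r}(sP_r+rP_s) + 2Q[1+sP+(r^2-s^2)P_s]
\]
is exactly the right-hand side appearing in the statement. Thus the whole task reduces to producing the relation $K\phi^2 = R_1$ under the stated hypotheses, with $P$ and $Q$ understood to be those furnished by \thmref{theo3}.

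First I would observe that the hypotheses place us precisely in the setting of \thmref{theo3}: $F=|y|\phi(r,s)$ is a spherically symmetric Douglas metric on $M_s^n$ with $n\geq 3$ and constant flag curvature, so $\phi$, $P$, and $Q$ are the functions supplied there (with $P$, $Q$ as in \eqref{Hsh} and $\phi$ as in \eqref{phiP}). Next I would apply \thmref{theo4}, which asserts that the flag curvature of the metric from \thmref{theo3} equals $K=R_1/\phi^2$. Since $F$ is a genuine Finsler metric, $\phi$ is positive, hence $\phi^2>0$, and one may clear this nonzero factor to obtain $K\phi^2 = R_1$. Substituting the definition \eqref{R2} then yields the asserted identity verbatim. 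Equivalently, the same conclusion follows from the Mo--Zhou characterization quoted above, namely that \eqref{finmet} has constant flag curvature $K$ if and only if $R_1=K\phi^2$ and $R_2=R_3=0$; the constant-curvature hypothesis already gives the first of these equations.

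I do not expect a genuine obstacle in establishing the equation itself, since all the analytic content is absorbed into \thmref{theo4} (whose proof is where the actual computation of the flag curvature lives). The only points demanding care are, first, confirming that the $P$ and $Q$ entering $R_1$ coincide with those of \thmref{theo3}, which holds by construction because \thmref{theo3} and \eqref{R2} refer to the same pair of functions; and second, recording the role of the hypothesis $K\neq 0$. This hypothesis does no work in deriving the identity, but, as the remark preceding the statement signals (and in combination with \thmref{teo1}), it is what makes the relation $\phi^2 = R_1/K$ solvable for $\phi$, so that $\phi=\sqrt{R_1/K}$ becomes an independent formula for $\phi$ in terms of $P$ and $Q$ alone.
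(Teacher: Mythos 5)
Your proposal is correct and takes essentially the same route as the paper, which offers no separate argument beyond treating the corollary as an immediate consequence of Theorem~\ref{theo3}, Theorem~\ref{theo4} (clearing the positive factor $\phi^2$ in $K=R_1/\phi^2$), and the quoted Mo--Zhou characterization, with $R_1$ expanded via \eqref{R2}. Your observation about the role of $K\neq 0$ also matches the paper's stated intent, namely that the identity then gives $\phi=\sqrt{R_1/K}$ as another way to compute $\phi(r,s)$.
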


	\section{Background}

	Let $M$ be a manifold and let $TM=\cup_{x\in M}T_xM$ be the tangent
	bundle of $M$, where $T_xM$ is the tangent space at $x\in M$. We
	set $TM_o:=TM\setminus\{0\}$ where $\{0\}$ stands for
	$\left\{(x,\,0)|\, x\in M,\, 0\in T_xM\right\}$. A {\em Finsler
		metric} on $M$ is a function $F:TM\to [0,\,\infty)$ with the
	following properties
	
	(a) $F$ is $C^{\infty}$ on $TM_o$;
	
	(b) At each point $x\in M$, the restriction $F_x:=F|_{T_xM}$ is a
	Minkowski norm on $T_xM$.

	\
	
	Finsler metrics defined on $M^n_s$ satisfying
	\begin{equation}
	F(Ax,\,Ay)=F(x,\,y) \label{1.4}
	\end{equation}
	for all $A\in O(n)$  are called {\em
		spherically symmetric}, or {\em orthogonally invariant} (cf. \cite{S}). Such metrics were first studied by Rutz in \cite{Rutz1996}. 
	
	\
	
	Before the demonstration of the main theorems, we collect some useful results. We start with a proposition first observed in \cite{MoSoTe1}.
	
	\begin{proposition}\label{propEdp}
		Let $f(r)$ and $g(r)$ be smooth functions on $I\subset \mathbb{R}$. The general solution of the transport equation \begin{align}\label{eqtransp}
		\left[1-(r^2-s^2)(2g(r)+f(r)s^2)\right]r{\psi}_s(r,s) + s{\psi}_r(r,s)=0,
		\end{align} 
		is given by
		\begin{align*}
		\psi(r,s)= \eta\left(\frac{{r^2-s^2}}{{(r^2-s^2)\int{2rfe^{\int{2r(2g+r^2f)dr}}dr-e^{\int{2r(2g+r^2f)dr}}}}}\right),
		\end{align*}
		where $\eta$ is any smooth function on $\mathbb{R}.$
	\end{proposition}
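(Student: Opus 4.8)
The plan is to solve \eqref{eqtransp} by the method of characteristics, since it is a homogeneous linear first-order PDE of the form $b(r,s)\psi_r + a(r,s)\psi_s = 0$ with $b(r,s)=s$ and $a(r,s)=r\left[1-(r^2-s^2)(2g(r)+f(r)s^2)\right]$. A solution is constant along the characteristic curves, so it suffices to produce a single first integral $\varphi(r,s)$ of the characteristic system; the general solution is then $\psi=\eta(\varphi)$ for an arbitrary smooth $\eta$, and I expect the stated quotient to be exactly such a first integral.

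First I would write the characteristic ODE. Along a characteristic curve $s=s(r)$ one has
\[
\frac{ds}{dr}=\frac{r\left[1-(r^2-s^2)(2g(r)+f(r)s^2)\right]}{s}.
\]
The crucial step is the substitution $u:=r^2-s^2$. Differentiating and inserting the characteristic equation, the $s$ in the denominator cancels and the leading $2r$ terms subtract, leaving
\[
\frac{du}{dr}=2r-2s\frac{ds}{dr}=2r(r^2-s^2)(2g+fs^2).
\]
Rewriting $s^2=r^2-u$ then converts this into the Bernoulli equation
\[
\frac{du}{dr}=2r\bigl(2g+fr^2\bigr)u-2rf\,u^2 .
\]

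The core of the argument is to linearize this Bernoulli equation. Setting $v:=u^{-1}=(r^2-s^2)^{-1}$ produces
\[
\frac{dv}{dr}+2r\bigl(2g(r)+f(r)r^2\bigr)v=2rf(r),
\]
a linear first-order ODE with integrating factor $\mu(r):=e^{\int 2r(2g+r^2f)\,dr}$. Integrating gives $\mu v=\int 2rf\mu\,dr+C$, so that the quantity
\[
C=\frac{\mu(r)}{r^2-s^2}-\int 2rf(r)\mu(r)\,dr
\]
is constant along characteristics. Solving this relation for $C$ and passing to $-1/C$, which is again a first integral, recovers precisely
\[
\varphi(r,s)=\frac{r^2-s^2}{(r^2-s^2)\int 2rf\,e^{\int 2r(2g+r^2f)dr}\,dr-e^{\int 2r(2g+r^2f)dr}} .
\]

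Since any smooth function of a first integral is itself annihilated by the transport operator, the general solution is $\psi=\eta(\varphi)$, as asserted. I expect the only real obstacle to be algebraic rather than conceptual: spotting the change of variables $u=r^2-s^2$ that collapses the $s$-dependence into a Bernoulli equation, and then carefully matching the resulting constant of integration with the exact quotient displayed in the statement, keeping track of the sign and the inversion $\varphi=-1/C$.
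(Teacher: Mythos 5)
Your proposal is correct and takes essentially the same route the paper sets up in Section 3 (the paper itself defers the proof of this proposition to \cite{MoSoTe1}, citing it as "first observed" there): along characteristics the substitution $u=r^2-s^2$ gives exactly $u'=2r(2g+fr^2)u-2rfu^2$, the linearization $v=1/u$ with integrating factor $\mu=e^{\int 2r(2g+r^2f)\,dr}$ yields the first integral $C=\mu/(r^2-s^2)-\int 2rf\mu\,dr$, and $-1/C$ is precisely the displayed quotient, as direct substitution into the transport operator confirms. The only caveats, shared with the paper's own formulation, are that the claim is local (one needs $s\neq 0$ so characteristics are graphs over $r$, and the antiderivatives are fixed only up to constants, which is harmless since they are absorbed into $\eta$), so there is no gap.
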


	\

	In \cite{D}, Douglas introduced the local functions 
	$D_j{}^i{}_{kl}$ on
	$\mathcal{T}\mathbb{B}^n(\nu)$ defined by
	\begin{equation}
	D_j{}^i{}_{kl}:=\frac{\partial^3}{\partial y^j\partial y^k\partial
		y^l}\left(G^i-\frac 1{n+1}\sum_m \frac{\partial G^m}{\partial
		y^m}y^i\right),  \label{3.1}
	\end{equation}
	in local coordinates $x^1,...,x^n$ and $y=\sum_i y^i \partial/\partial x^i$. 
	These functions are called  {\em Douglas curvature} \cite{D} and a Finsler metric 
	$F$ is said to be a {\em Douglas metric} if $D_j{}^i{}_{kl}=0$.
	In  \cite{MoSoTe1}, the authors proved that a spherically symmetric 
	Finsler metric on  $M_s^n$ is Douglas type if, and only if, there exist functions $f=f(r)$ and $g=g(r)$ such that,
	\begin{align}\label{1.5}
	Q=g+\frac{s^2}{2}f.
	\end{align}
	\noindent
	Note that $F=|y|\phi(r,s)$ is projectively flat if, and only if, $f=g=0.$

	\
	
	Let ${{R_j}^i}_{kl}$ denote the Riemannn curvature tensor of the Berwald connection and ${R^i}_k={{R_j}^i}_{kl}y^jy^l$. Now, it can be demonstrated that a Finsler metric $F$ on a manifold $M$ is of scalar flag curvature $K=K(x,y)$ if, and only if,
	\[{R^i}_ k=K\{F^2\delta^i_ k-g_{kl}y^ly^i\}.\]
	
	\begin{lemma}\cite{2012arXiv1202.4543M,MANA:MANA201400124}
		Let $F=|y|\phi\left(|x|,\,\frac{\langle
			x,\,y\rangle}{|y|}\right)$ be a spherically symmetric Finsler on $\Omega$, then the Riemann curvature tensor ${R^i}_j $ is given by
		\[{R^i}_j=R_1(|y|^2\delta^i_ j-y^iy^j)+|y|R_2(|y|x^j-sy^i)x^i+R_4(|y|x^j-sy^j)y^i,\]
		where $R_1,$ $R_2$ and  $R_3$ are given by \eqref{R2}, and  \[R_4:=\frac{1}{2}\left\{3R_3-[R_1]_s\right\}.\]
	\end{lemma}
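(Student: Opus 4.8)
The plan is to compute ${R^i}_{j}$ directly from the geodesic coefficients $G^i=|y|Py^i+|y|^2Qx^i$ using the standard spray formula for the Riemann curvature of the Berwald connection,
\[
{R^i}_{k}=2\frac{\partial G^i}{\partial x^k}-y^j\frac{\partial^2 G^i}{\partial x^j\partial y^k}+2G^j\frac{\partial^2 G^i}{\partial y^j\partial y^k}-\frac{\partial G^i}{\partial y^j}\frac{\partial G^j}{\partial y^k},
\]
and then to organise the resulting expression into the tensorial building blocks appearing in the statement. Since $P=P(r,s)$ and $Q=Q(r,s)$ are already given by \eqref{defP} and \eqref{defQ}, the whole computation is reduced to differentiating these two scalar functions and repackaging the output.

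First I would record the elementary derivatives of the scalar quantities $r=|x|$ and $s=\langle x,y\rangle/|y|$ on which $P$ and $Q$ depend, namely
\[
\frac{\partial r}{\partial x^j}=\frac{x^j}{r},\quad \frac{\partial s}{\partial x^j}=\frac{y^j}{|y|},\quad \frac{\partial |y|}{\partial y^j}=\frac{y^j}{|y|},\quad \frac{\partial s}{\partial y^j}=\frac{x^j}{|y|}-\frac{s\,y^j}{|y|^2},
\]
together with the fact that $r$ is independent of $y$. Chain-ruling these through $P$ and $Q$ turns every derivative of $G^i$ into a combination of $P,Q$ and their first- and second-order $r,s$-partials, each multiplied by a tensor assembled from $\delta^i_j$, $x^i$, $y^i$, $x^j$, $y^j$ and the scalars $|y|$, $r$, $s$. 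These expressions are then substituted into the spray formula for ${R^i}_{k}$.

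The core of the argument is the bookkeeping. After substitution one collects the many terms according to their tensorial type. Spherical symmetry guarantees that ${R^i}_{j}$ must be built from the $O(n)$-invariant tensors, so the only structures that can survive are $|y|^2\delta^i_j-y^iy^j$, the mixed blocks $(|y|x^j-sy^i)x^i$ and $(|y|x^j-sy^j)y^i$; grouping the coefficients of these and simplifying with the relation $\langle x,y\rangle=s|y|$ should produce exactly the scalar functions $R_1$ and $R_2$ of \eqref{R2}, while the remaining contribution repackages into the coefficient $R_4=\tfrac12\{3R_3-[R_1]_s\}$, with $R_3$ as in \eqref{R2}. The main obstacle is purely computational: keeping track of the second derivatives $P_{rr},P_{rs},P_{ss},Q_{rr},Q_{rs},Q_{ss}$ and verifying that the coefficient of each tensor type coalesces into precisely the compact forms in \eqref{R2}. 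A useful internal check is homogeneity, since ${R^i}_{k}$ must be positively homogeneous of degree two in $y$; this constrains how $|y|$, $s$ and the derivatives can combine and rules out many potential grouping errors. The appearance of $R_4$ as a fixed linear combination of $R_3$ and $[R_1]_s$, rather than as an independent function, is the final consistency relation that the collected terms must satisfy.
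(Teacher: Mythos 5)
This lemma is quoted by the paper from \cite{2012arXiv1202.4543M,MANA:MANA201400124}; the paper itself supplies no proof, so the only benchmark is the computation in those references, and your plan is exactly that computation: substitute $G^i=|y|Py^i+|y|^2Qx^i$ into the standard spray-curvature formula, chain-rule through $r$ and $s$, and collect terms. Your curvature formula and the elementary derivatives of $r$, $s$, $|y|$ are all correct, and degree-two homogeneity in $y$ is indeed the right running sanity check.

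Two caveats keep your text a roadmap rather than a proof. First, the entire content of the lemma --- that the collected coefficients are precisely $R_1$ and $R_2$ of \eqref{R2}, and that the remaining coefficient equals $\tfrac12\{3R_3-[R_1]_s\}$ --- is asserted (``should produce exactly''), not derived; the identity defining $R_4$ is an outcome of the explicit computation, not a consistency condition one is free to impose at the end. Second, your appeal to $O(n)$-invariance undercounts the possibilities: invariance alone allows five independent structures $\delta^i_j$, $x^ix^j$, $x^iy^j$, $y^ix^j$, $y^iy^j$ with scalar coefficients in $(r,s,|y|)$, not the three blocks of the statement; the reduction to the stated combination requires either the identity ${R^i}_k y^k=0$ (note each block is annihilated by contraction with $y$, using $\langle x,y\rangle=s|y|$) or must simply fall out of the term-by-term bookkeeping, which you have not carried out. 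Incidentally, you reproduced the paper's typo: the middle block should read $(|y|x^j-sy^j)x^i$, not $(|y|x^j-sy^i)x^i$, as otherwise the indices do not match.
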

	
	It is well known (cf. \cite{Mat1980}) that a Finsler metric has vanishing Weyl curvature if, and only if, it has scalar curvature. Using this result, the authors in \cite{MANA:MANA201400124} presented the next lemma.
	\begin{lemma}\label{lemmascalarK}\cite{MANA:MANA201400124}
		Let $m \geq 3$ and let $F = |y|\phi( |x| , \frac{<x,y>}
		{|y|} )$ be a spherically symmetric Finsler metric on $M_s^n$. Then $F$ is of scalar curvature if, and only if, $\phi(r,s)$ satisfies
		\begin{align}\label{R_2=0}
		R_2:=2Q(2Q-sQ_s)+\frac{1}{r}(2Q_r-sQ_{rs}-rQ_{ss})+(r^2-s^2)(2QQ_{ss}-Q_s^2)=0.
		\end{align}
	\end{lemma}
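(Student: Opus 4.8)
The plan is to reduce the scalar-curvature property to the purely algebraic problem of matching tensor structures, using the three facts already collected in the excerpt: Matsumoto's characterization (scalar flag curvature is equivalent to vanishing Weyl curvature, which is exactly where the hypothesis $m\geq 3$ enters, since the Weyl tensor is identically zero on surfaces), the pointwise criterion ${R^i}_k=K\{F^2\delta^i_k-g_{kl}y^ly^i\}$, and the explicit decomposition of ${R^i}_j$ in terms of $R_1,R_2,R_4$ from the preceding lemma. First I would compute $g_{kl}y^l=\tfrac12\,[F^2]_{y^k}$ for $F=|y|\phi(r,s)$; using $s_{y^k}=x^k/|y|-s\,y^k/|y|^2$ one gets $g_{kl}y^l=\phi(\phi-s\phi_s)\,y^k+|y|\phi\phi_s\,x^k$, so the right-hand side of the scalar criterion is a linear combination of the tensors $\delta^i_k$, $y^iy^k$ and $x^ky^i$ only, with coefficients built from $K$, $\phi$ and $\phi_s$.

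Next I would expand the left-hand side ${R^i}_k$ by the decomposition from the previous lemma, producing a linear combination of $\delta^i_k$, $y^iy^k$, $x^ix^k$, $x^iy^k$ and $x^ky^i$, with coefficients built from $R_1$, $R_2$ and $R_4$. Since $x$ and $y$ are generically linearly independent on $M^n_s$, these tensor structures are independent, so the scalar-curvature identity holds if and only if the coefficients agree structure by structure. The coefficient of $\delta^i_k$ yields $K=R_1/\phi^2$ (this is precisely \thmref{theo4}), while the structure $x^ix^k$ occurs on the curvature side only through the $R_2$ term and is entirely absent from the scalar form; its coefficient is a nonzero multiple of $R_2$, which forces $R_2=0$. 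This settles the necessity direction and pins down the value of $K$.

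For sufficiency I would substitute $K=R_1/\phi^2$ and $R_2=0$ and verify that the remaining coefficient equations hold automatically. Matching the $y^iy^k$ and $x^ky^i$ structures collapses, once $R_2=0$, to the single relation $R_4=-R_1\phi_s/\phi$, and the crucial point is that this is not an extra hypothesis but an identity: it should follow from the definition $R_4=\tfrac12\{3R_3-[R_1]_s\}$ together with the differential relations among $P$ and $Q$ coming from \eqref{defQ} and \eqref{defP} (and, where needed, from $R_2=0$ itself). Establishing this identity by direct differentiation is the main obstacle, since it is the only step that genuinely uses the detailed form of $P$, $Q$, $R_1$ and $R_3$ rather than formal tensor bookkeeping; the conceptual reason it must hold is that, for $m\geq 3$, Matsumoto's theorem reduces the whole Weyl tensor to components proportional to $R_2$, leaving no independent obstruction once $R_2$ vanishes. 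With this identity in hand, the scalar criterion is satisfied precisely when $R_2=0$, which is the equation \eqref{R_2=0}, completing the equivalence.
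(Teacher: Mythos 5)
There is a genuine gap, and before naming it, note a point of comparison: the paper contains no proof of this lemma at all --- it is quoted from Liu--Mo \cite{MANA:MANA201400124}, with only the remark that it rests on Matsumoto's theorem (vanishing Weyl curvature $\Leftrightarrow$ scalar curvature), so your attempt has to stand on its own. Your necessity direction does: the computation $g_{kl}y^l=\phi(\phi-s\phi_s)y^k+|y|\phi\phi_s x^k$ is correct, and matching the structures $\delta^i_k$, $x^ix^k$, $x^iy^k$, $y^ix^k$, $y^iy^k$ against the decomposition of ${R^i}_k$ forces $R_2=0$ and $K=R_1/\phi^2$. One correction of attribution, though: in your argument the hypothesis $m\geq 3$ enters through the linear independence of these five tensor structures, not through Matsumoto. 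For $n=2$ the identity $\delta^i_k$ lies in the span of the four dyads built from the basis $\{x,y\}$, so coefficient matching is illegitimate there --- consistently with the fact that every Finsler surface is trivially of scalar curvature while $R_2$ (whose defining formula \eqref{R_2=0} does not involve $n$) need not vanish; your parenthetical crediting $m\geq 3$ solely to the Weyl tensor vanishing on surfaces misses that your own matching step would otherwise prove the false statement $R_2\equiv 0$ in dimension two.

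The gap is in sufficiency, and you half-acknowledge it. Your matching shows that scalar flag curvature is equivalent to the \emph{pair} of conditions $R_2=0$ and $R_4=-R_1\phi_s/\phi$, i.e.\ $3R_3=[R_1]_s-2R_1\phi_s/\phi$; the lemma asserts equivalence to $R_2=0$ alone, so everything hinges on proving that $R_2=0$ implies the second relation via the explicit formulas \eqref{defQ}, \eqref{defP} and \eqref{R2}. You defer exactly this step (``the main obstacle''), and the conceptual justification you offer --- that for $m\geq 3$ Matsumoto's theorem ``reduces the whole Weyl tensor to components proportional to $R_2$'' --- is circular: Matsumoto's theorem is a general equivalence between scalar curvature and vanishing Weyl curvature and says nothing about which components of the Weyl curvature of a spherically symmetric metric survive; the claim that only $R_2$-proportional components remain \emph{is} the lemma being proved. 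That single differential identity is the entire analytic content of the ``if'' direction (in Liu--Mo it is absorbed into the computation of the Weyl curvature, whose trace subtraction disposes of the $R_1$- and $R_4$-contributions automatically), and since your proposal asserts it rather than establishes it, the proof is incomplete precisely where the real work lies.
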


	\begin{lemma}\cite{2012arXiv1202.4543M}
		Let $F=|y|\phi\left(|x|,\,\frac{\langle
			x,\,y\rangle}{|y|}\right)$ be a spherically symmetric Finsler on $M_s^n$, then $F$ has constant flag curvature $K$ if, and only if,
		\begin{align}
		R_1&=K\phi^2 \label{condR1};\\
		R_2&=0 \label{CondR2};\\
		R_3&=0. \label{CondR3}
		\end{align}
	\end{lemma}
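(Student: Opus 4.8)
The plan is to confront two expressions for the Riemann curvature ${R^i}_j$ of the Berwald connection and to read off the three scalar equations by comparing coefficients. On one hand, recall the pointwise criterion for scalar flag curvature recorded just above the preceding lemmas: $F$ has flag curvature $K=K(x,y)$ exactly when
\[
{R^i}_j=K\left\{F^2\delta^i_j-g_{jl}y^ly^i\right\},
\]
and constant flag curvature is the special case in which $K$ is a genuine constant. On the other hand, the preceding lemma gives the explicit decomposition
\[
{R^i}_j=R_1(|y|^2\delta^i_j-y^iy^j)+|y|R_2(|y|x^j-sy^j)x^i+R_4(|y|x^j-sy^j)y^i,
\]
with $R_4=\tfrac12\{3R_3-[R_1]_s\}$. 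The strategy is to expand both displays in the common tensor basis $\{\delta^i_j,\ y^iy^j,\ x^ix^j,\ x^iy^j,\ x^jy^i\}$ and equate.

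First I would compute the missing ingredient on the criterion side, namely $g_{jl}y^l$. Since $F^2=|y|^2\phi^2$ is positively $2$-homogeneous in $y$, Euler's relation gives $g_{jl}y^l=\tfrac12\partial_{y^j}F^2$; differentiating $|y|^2\phi(r,s)^2$ and using $\partial_{y^j}|y|=y^j/|y|$ together with $\partial_{y^j}s=x^j/|y|-sy^j/|y|^2$ yields
\[
g_{jl}y^l=(\phi^2-s\phi\phi_s)y^j+|y|\phi\phi_s\,x^j.
\]
Substituting this into $K\{F^2\delta^i_j-g_{jl}y^ly^i\}$ expresses the criterion side entirely through $\delta^i_j$, $y^iy^j$ and $x^jy^i$, with coefficients $K|y|^2\phi^2$, $-K(\phi^2-s\phi\phi_s)$ and $-K|y|\phi\phi_s$ respectively; crucially, no $x^ix^j$ and no $x^iy^j$ term is present.

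Now I would match coefficients, which is legitimate because for $x,y$ linearly independent (available since $n\ge 3$) the five listed tensors are linearly independent, while the coefficients are functions of $(r,s)$ with $K$ constant. The $\delta^i_j$ component gives $R_1=K\phi^2$; the $x^ix^j$ component forces $|y|^2R_2=0$, hence $R_2=0$ (and then the $x^iy^j$ component, whose coefficient is $-|y|R_2s$, vanishes automatically, matching the criterion side); the $x^jy^i$ component gives $R_4=-K\phi\phi_s$. Feeding $R_1=K\phi^2$ into $R_4=\tfrac12\{3R_3-[R_1]_s\}$ and using $[R_1]_s=2K\phi\phi_s$ turns $R_4=-K\phi\phi_s$ into $\tfrac32R_3=0$, that is $R_3=0$; the remaining $y^iy^j$ component is then a consequence of these and imposes nothing new. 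This proves the ``only if'' direction, and running the same computation backwards---define $R_4$ from $R_3=0$ and $R_1=K\phi^2$, substitute into the decomposition, and collapse it onto the criterion side---proves the ``if'' direction.

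The computation is elementary once it is set up; the point that deserves care is the bookkeeping of the tensor basis, namely verifying that the criterion side genuinely carries no $x^ix^j$ or $x^iy^j$ contribution so that the vanishing of $R_2$ is forced, and justifying the coefficient comparison by the linear independence of $\{\delta^i_j,y^iy^j,x^ix^j,x^iy^j,x^jy^i\}$ at generic $(x,y)$. Alternatively, since Lemma \ref{lemmascalarK} already identifies scalar curvature with $R_2=0$ and then forces $K=R_1/\phi^2$, one could argue that constancy of $K$ amounts to $R_1=K\phi^2$, whence the relation linking $R_3$, $[R_1]_s$ and $R_4$ yields $R_3=0$; I prefer the direct comparison, as it produces all three equations simultaneously.
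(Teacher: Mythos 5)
Your proposal is correct, and it is essentially the canonical derivation: the paper itself gives no proof of this lemma (it is quoted from \cite{2012arXiv1202.4543M}), but your argument assembles it from exactly the two facts the paper records immediately beforehand --- the decomposition of ${R^i}_j$ in terms of $R_1,R_2,R_4$ and the criterion ${R^i}_k=K\{F^2\delta^i_k-g_{kl}y^ly^i\}$ --- which is how the cited source proceeds. The computation checks out: $g_{jl}y^l=(\phi^2-s\phi\phi_s)y^j+|y|\phi\phi_s\,x^j$ is right, the five tensors are indeed linearly independent once $x,y$ are linearly independent and $n\ge 3$ (one uses a vector orthogonal to both to kill the $\delta^i_j$ coefficient), and the step extracting $R_3=0$ from $R_4=\tfrac12\{3R_3-[R_1]_s\}$ correctly exploits the constancy of $K$ via $[R_1]_s=2K\phi\phi_s$ --- this is precisely where scalar flag curvature (which only forces $R_2=0$, cf.\ Lemma~\ref{lemmascalarK}) differs from constant flag curvature. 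Two minor points worth making explicit: you tacitly corrected the typo in the paper's statement of the decomposition lemma, where $|y|R_2(|y|x^j-sy^i)x^i$ should read $|y|R_2(|y|x^j-sy^j)x^i$ (otherwise the indices are inconsistent); and the coefficient comparison is performed at points with $|s|<r$, where $x$ and $y$ are automatically independent, the resulting identities in $(r,s)$ extending to $|s|=r$ by continuity.
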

	
	\noindent The above result was improved in \cite{SEZShen2015}. Moreover, they also proved the theorem below.
	\begin{theorem}\cite{SEZShen2015}\label{theoersa}
		Let $F=|y|\phi(r,s)$ be a spherically symmetric Finsler metric on $M_s^n\subset \mathbb{R}^n$ ($n\geq 3$). Then $F$ is of constant flag curvature $K$ if, and only if, \[R_2=0, \quad R_3=0.\]
	\end{theorem}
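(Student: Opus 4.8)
The plan is to deduce the claimed reduction from the three-equation characterization furnished by the Mo--Zhou lemma (namely, $F$ has constant flag curvature $K$ if and only if $R_1=K\phi^2$, $R_2=0$ and $R_3=0$) by showing that, in the presence of $R_2=0$ and $R_3=0$, the remaining equation $R_1=K\phi^2$ holds automatically for a suitable \emph{constant} $K$. With this in hand the forward implication is immediate: if $F$ has constant flag curvature then the Mo--Zhou lemma already yields $R_2=0$ and $R_3=0$. Hence essentially all of the work lies in the converse, and that is where I would concentrate.

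For the converse I would first invoke \lemref{lemmascalarK}: the hypothesis $R_2=0$ is precisely equivalent to $F$ being of scalar flag curvature, so there is a function $K=K(x,y)$ with ${R^i}_j=K\{F^2\delta^i_j-g_{jl}y^ly^i\}$. Next I would compute $g_{jl}y^l=\tfrac12[F^2]_{y^j}$ explicitly for $F=|y|\phi(r,s)$, obtaining $g_{jl}y^ly^i=\phi^2y^iy^j+\phi\phi_s(|y|x^j-sy^j)y^i$, and then compare this normal form with the spherically symmetric expression for ${R^i}_j$ supplied by the Riemann-curvature lemma. Since $R_2=0$ kills the $x^i$-term, matching the coefficient of $(|y|^2\delta^i_j-y^iy^j)$ forces $K=R_1/\phi^2$, while matching the coefficient of $(|y|x^j-sy^j)y^i$ forces $R_4=-K\phi\phi_s$. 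Substituting the definition $R_4=\tfrac12\{3R_3-[R_1]_s\}$ together with $K=R_1/\phi^2$ into this last relation and simplifying, I expect to reach the identity
\[
3R_3=\phi^2\,\partial_s\!\left(\frac{R_1}{\phi^2}\right)=\phi^2 K_s .
\]
Consequently the second hypothesis $R_3=0$ is equivalent to $K_s=0$, that is, the scalar flag curvature $K=K(r,s)$ is independent of $s$ and is therefore a function $K=K(r)=K(|x|)$ of the position alone.

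It remains to upgrade ``$K$ depends only on $x$'' to ``$K$ is constant,'' and this is exactly where the standing hypothesis $n\ge 3$ enters. I would apply the Finslerian Schur lemma: a Finsler metric of dimension $n\ge 3$ whose flag curvature is scalar and isotropic (a function of $x$ only) necessarily has constant flag curvature. This produces a constant $K$ with $R_1/\phi^2=K$, i.e. $R_1=K\phi^2$; combined with $R_2=0$ and $R_3=0$, the Mo--Zhou lemma then certifies that $F$ has constant flag curvature $K$, which completes the converse and hence the equivalence.

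The main obstacle I anticipate is the curvature-matching step that produces $3R_3=\phi^2K_s$: one must carefully verify that $R_2=0$ genuinely forces the compatibility $R_4=-K\phi\phi_s$ — equivalently, that the quantities $R_1,R_3,R_4$ are not independent but obey this Bianchi-type relation once the scalar (vanishing Weyl) condition holds — and then execute the algebra tying $R_4$, $[R_1]_s$ and $K_s$ together through the explicit form of $g_{jl}y^l$. A secondary, more conceptual point is to ensure that the passage from $K=K(r)$ to $K=\mathrm{const}$ is justified by Schur in exactly the scalar-isotropic setting, and that the $n\ge 3$ hypothesis is consumed precisely there, since the analogous conclusion can fail for surfaces.
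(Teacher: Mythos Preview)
The paper does not supply its own proof of this statement: Theorem~\ref{theoersa} is quoted verbatim from \cite{SEZShen2015} as background, so there is nothing in the present paper to compare your argument against. That said, your proposal is correct. The forward implication is immediate from the Mo--Zhou lemma, and for the converse your chain of deductions is sound: with $R_2=0$ one has scalar flag curvature by \lemref{lemmascalarK}; your computation $g_{jl}y^l=\phi^2 y_j+\phi\phi_s(|y|x_j-sy_j)$ is right, and since the tensors $(|y|^2\delta^i_j-y^iy^j)$ and $(|y|x_j-sy_j)y^i$ are linearly independent the matching does force both $R_1=K\phi^2$ and $R_4=-K\phi\phi_s$; plugging into $R_4=\tfrac12(3R_3-[R_1]_s)$ gives exactly $3R_3=\phi^2K_s$, so $R_3=0$ yields $K=K(r)=K(|x|)$; and the Finslerian Schur lemma in dimension $n\ge3$ then makes $K$ constant. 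This is in fact the line of argument used in \cite{SEZShen2015}, so your reconstruction matches the original source even though the present paper omits the proof. The two caveats you flag are not genuine obstacles: the ``Bianchi-type'' compatibility you worry about is automatic once $R_2=0$ puts ${R^i}_j$ in scalar form, and the Schur step is the standard isotropic-implies-constant theorem valid for Finsler metrics with $n\ge3$.
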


	\section{Proof of Main Results}
	
	Here we use the characteristic method to obtain the general solution of the so called transport equation (cf. \cite{PR}) with  
	nonconstant coefficients 
	\begin{equation}\label{transpeq}
	\psi_r(r,s)+v(r,s)\psi_s(r,s)=P(r,s,\psi(r,s)). 
	\end{equation} 
	Note that \eqref{transpeq}
	can be written as $\vec{v} \cdot \nabla {\psi}=P$, where $\vec{v}=(1, v)$ and $\nabla {\psi}= ({\psi}_r , {\psi}_s)$. We can describe this equation as follows: we are seeking for a surface $z={\psi}(r,s)$ in which the directional derivative of $\psi$, in the direction of the vector $\vec{v}$, is equal to $P(r,s,\psi)$. This interpretation leads us to a method to solve (\ref{transpeq}).
	
	Curves $(r,X(r))$ in the $(r,s)$ plane that are tangent to $\vec{v}$ are called \emph{characteristic curves}. It follows that the characteristic curve passing through $(r,s)=(r_0,c)$ is the graph of the function $X$ that satisfies 
	\begin{equation}\label{EDO2}
	\frac{dX}{dr}=v(r,X(r))\quad\mbox{in which}\quad X(r_0)=c.
	\end{equation}

	Defining the value of ${\psi}$ along the characteristic curve $(r,X(r))$  by ${\Psi}(r):={\psi}(r,X(r))$, we have \begin{equation}\label{Psilinha}
	\frac{d}{dr}{\Psi}={\psi}_r + v{\psi}_s.
	\end{equation}
	Thus, the value of ${\psi}$ along a characteristic curve is determined by 
	\begin{equation}\label{EDO3}
	{\Psi}^{\prime}=P(r,X(r),\Psi(r)).
	\end{equation}
	
	The solution of \eqref{EDO3}, with initial data ${\Psi}(r_0)={\psi}_{r_0}(c)$, determines the value of ${\psi}$ along the characteristic curve that intersects the $(r_0,s)$-axis at $(r_0,c)$, because ${\Psi}(r_0)={\psi}(r_0,X(0))={\psi}(r_0,c)$. The surface $z=\psi(r,s)$ is the collection (or envelope) of space curves created as $c$ takes all real values.
	
	Next we demonstrate that this method is needed in order to solve an equation of the form 
	\eqref{transpeq} which will be useful to prove Theorem \ref{theonot}, Theorem \ref{teo1} and Theorem \ref{theo3}.
	
	\
	
	\noindent {\bf Proof of Theorem \ref{theonot}:} First of all, we derive the equation \eqref{R_2=0} to obtain  
	\[s\left[(r^2-s^2)^{3/2}(Q_s-sQ_{ss})\right]_r + (1-(r^2-s^2)2Q)r\left[(r^2-s^2)^{3/2}(Q_s-sQ_{ss})\right]_s=0.\]
	Note that a solution of these equation is 
	\begin{align}\label{QssQss}
	(r^2-s^2)^{3/2}(Q_s-sQ_{ss})=k,
	\end{align}
	where $k\in\mathbb{R}$. Hence, solving \eqref{QssQss} we get
	\[Q(r,s)=\frac{k\sqrt{r^2-s^2}s}{r^4} + \frac{c_1(r)}{2}s^2 + c_2(r),\]
	where $c_1(r)$ and $c_2(r)$ are smooth functions.
	
	Then, replacing $Q(r,\,s)$ in \eqref{R_2=0} we have 
	\[Q(r,s)=k\frac{\sqrt{r^2-s^2}s}{r^4} + \left(\frac{c_2'(r)+2rc_2^2(r)}{r(1-2r^2c_2(r))}-\frac{k^2}{2r^4(1-2r^2c_2(r))}\right)s^2 + c_2(r).\]
	Furthermore, if $k=0$ then $Q(r,\,s)$ defines a Douglas metric.
	
	In what follows, we consider $ c_2(r)=0, $  $ k\neq 0, $ $ r^2-s^2\neq 0 $ and $ s\neq 0$. Turns out that applying $Q(r,\,s)$ in \eqref{defQ} we need to solve 
	\begin{align}\label{eqtransp2}
	\left[1-\frac{(r^2-s^2)}{r^4}((r^2-s^2)-(ks-\sqrt{r^2-s^2})^2)\right]r{\psi}_s(r,s) + s{\psi}_r(r,s)=0,
	\end{align}
	in which \begin{align}\label{phissphiss} \psi(r,s)=\sqrt{r^2-s^2}(\phi(r,s)-s\phi_s(r,s)). \end{align}
	
	On the other hand, (\ref{eqtransp2}) is equivalent to a transport equation \[s\psi_r + v(r,s)\psi_s=0\]
	\[\psi(r_0,s)=\psi_{r_0}(s),\]
	where 
	\[v(r,s)= \frac{r}{s}\left[1-\frac{(r^2-s^2)}{r^4}\left((r^2-s^2)-\left(ks-\sqrt{r^2-s^2}\right)^2\right)\right].\]
	
	Considering the characteristic curves $ (r,X(r)) $ for the equation above, from \eqref{EDO2} we get
	\begin{align}\label{Xr}
	X'=\frac{r}{X}\left[1-\frac{(r^2-X^2)}{r^4}\left(r^2-X^2-(kX-\sqrt{r^2-X^2})^2\right)\right]    
	\end{align} which is equivalent to 
	\[\frac{r^3(r-XX')}{(r^2-X^2)^2}=1-\left(\frac{kX}{\sqrt{r^2-X^2}}-1\right)^2.\]
	By defining \[ \kappa(r)=k\frac{X(r)}{\sqrt{r^2-X(r)^2}}-1, \] we rewrite  \eqref{Xr} in the following form
	\[\frac{(1+k^2)(\kappa+1)\kappa'}{(\kappa+\frac{1}{1+k^2})^2 + \frac{k^2}{(1+k^2)^2}} = \frac{1}{r}.\]
	
	Integrating both therms we obtain the solution  $X(r)$ (implicitly) of \eqref{EDO2}:
	\[\frac{1}{2}\ln\left((1+k^2)\kappa^2+2\kappa+1\right) + k\arctan \left(\frac{(1+k^2)\kappa + 1}{k}\right) =(1+k^2)\ln(r) + k_1,\]
	where $k_1$ is obtained from the initial condition of \eqref{EDO2}.
	The characteristic curve through a given point $(r,s)$ passing through $(r_0,s)$ exists at $(r_0,c)$ in which
	\begin{eqnarray*}
		k_1(c)&=&\frac{1}{2}\ln\left((1+k^2)\kappa^2(r,s)+2\kappa(r,s)+1\right)\nonumber\\
		&&+ k\arctan \left(\frac{(1+k^2)\kappa(r,s) + 1}{k}\right) -(1+k^2)\ln(r).
	\end{eqnarray*}
	Straightaway equation \eqref{EDO3}, with initial condition $\Psi(r_0)=\psi(c)$, gives us $\Psi(r)=\psi(c).$ Then the solution of the initial problem \eqref{EDO2}-\eqref{EDO3} is
	\[\Psi(r,s)=\psi\left(k_{1}(c)\right).\]
	
	Therefore, we can infer that \eqref{varphi2} is a solution of \eqref{eqtransp2}. Thus, any differentiable real function $ \eta $ of (\ref{varphi2}) is the
	general solution of \eqref{eqtransp2}. It follows from (\ref{phissphiss}) that
	\begin{align*}
	\phi-s\phi_s=\frac{\eta(\overline{\varphi})}{\sqrt{r^2-s^2}}.
	\end{align*}
	Finally, we conclude that \eqref{phinondouglas} is the general solution of \eqref{eqtransp2}-\eqref{phissphiss}.
	\hfill $\Box$
	
	\vspace{.2in}
	
	In this part, we prove Theorem \ref{teo1} and Theorem \ref{theo3} making use of  Proposition \ref{propEdp}. 

	\
	
	\noindent {\bf Proof of Theorem \ref{teo1}:}
	We have already established that a Finsler metric $F=|y|\phi(r,s)$ is Douglas type if, and only if, there exist $f(r)$ and $g(r)$ smooth functions of $r$ such that $Q(r)=g(r)+\frac{s^2}{2}f(r)$. Thus, replacing this in the equation \eqref{R_2=0} we obtain $g(r)\neq \frac{1}{2r^2}$ and \[f(r)=\frac{2g'(r)+4rg^2(r)}{r-2r^3g(r)}.\] So, $\phi$ must satisfy the ordinary differential equation
	\begin{align}\label{eqdouglas}
	\frac
	1{2r}\frac{r\phi_{ss}-\phi_r+s\phi_{rs}}{\phi-s\phi_s+(r^2-s^2)\phi_{ss}}=g(r)+s^2\frac{g'(r)+2rg^2(r)}{r-2r^3g(r)}.
	\end{align}
	Now, for $r^2-s^2> 0$ and $s\neq 0$, \eqref{eqdouglas} is equivalent to \eqref{eqtransp}, where $\psi(r,s)=\sqrt{r^2-s^2}(\phi-s \phi_s)$ and $f(r)=\frac{2g'(r)+4rg^2(r)}{r-2r^3g(r)}$. Then, from \eqref{eqdouglas} we get
	\begin{align}\label{phi_sphi}
	\sqrt{r^2-s^2}(\phi-s \phi_s)= -s^2\sqrt{r^2-s^2}\left(\frac{\phi}{s}\right)_s=\eta(\varphi(r,s)),
	\end{align}
	where 
	\begin{align*}
	\varphi(r,s)=\frac{r^2-s^2}{4(r^2-s^2)\displaystyle\int \frac{g'(r)+2rg^2(r)}{(1-2r^2g(r))}T(r)dr - T(r)}.
	\end{align*}
	Here $\eta$ is any smooth real function of  $\varphi(r,s)$ and \[T(r):=\frac{e^{-\int\frac{4rg(r)}{1-2r^2g(r)}dr}}{(1-2r^2g(r))^2}.\]
	
	We acknowledge that Yu and Zhu \cite{YZ} gave a necessary and sufficient condition  for  $F=\alpha\phi(||\beta_{x}||_{\alpha},\frac{\beta}{\alpha})$ to be a Finsler metric for any $\alpha$ and $\beta$ with $||\beta_{x}||_{\alpha}<b_0$, where $\alpha$ and $\beta$ are, respectively, a Riemannian metric and a $1$-form. In particular,  considering $F(x,y)=|y|\phi(|x|,\frac{<x,y>}{|y|})$, then $F$ is a Finsler metric if, and only if, the positive function $\phi$ satisfies 
	\[
	\phi(s)-s\phi_s(s)+(r^2-s^2)\phi_{ss}(s)>0, \qquad \mbox{ when  } n\geq 2,
	\]
	such that  
	\[
	\phi(s)-s\phi_s(s)>0, \qquad  \mbox{ when } n\geq 3.
	\]
	
	Thus, for $F=|y|\phi(r,s)$, when $\phi$ is given by (\ref{phiscalar}), using \eqref{phi_sphi} we get
	\[\frac{-s}{\sqrt{r^2-s^2}}(\phi-s\phi) + \sqrt{r^2-s^2}(-s\phi_{ss})=\eta_s.\] 
	Therefore, $F$ defines a Finsler metric if, and only if, the inequalities (\ref{etateo2}) hold.
	\hfill $\Box$

	\

	\noindent {\bf Proof of Theorem \ref{theo3}}:
	At this point we look for spherically symmetric Finsler metric with constant flag curvature using Theorem 5.
	
	Similarly to the proof of Theorem 2, we  know that a Finsler metric $F=|y|\phi(r,s)$ is of Douglas type if, and only if, $Q(r)=g(r)+\frac{s^2}{2}f(r)$. Thus, replacing this in the equation \eqref{R_2=0} we obtain $g(r)\neq \frac{1}{2r^2}$ and \[f(r)=\frac{2g'(r)+4rg^2(r)}{r-2r^3g(r)}.\]
	Writing down $R_3=0$, given by \eqref{R2}, like $P:=P(r,\,s)$. Then, for  $r^2-s^2>0$ and $s\neq 0$, $R_3=0$ is equivalent to the transport equation  \eqref{eqtransp}, where $\psi(r,s)=\sqrt{r^2-s^2}(P-s P_s)$ and $f(r)=\frac{2g'(r)+4rg^2(r)}{r-2r^3g(r)}$. By Proposition \ref{propEdp} we have,
	\begin{align*}
	P(r,s)=s \left( h(r) - \int{\frac{\overline{\eta}(\varphi(r,s))}{s^2\sqrt{r^2-s^2}} ds}\bigr.\right) 
	\end{align*}
	such that
	\begin{equation*}
	\varphi(r,s)=\frac{r^2-s^2}{(r^2-s^2)\overline{T}(r) - T(r)},
	\end{equation*} 
	where $h$ and $\overline{\eta}$ are arbitrary smooth real functions.
	\noindent

	On the other hand  (see proof of Theorem 1.1 in \cite{2012arXiv1202.4543M}), we define
	\[U:=\frac{s\phi+(r^2-s^2)\phi_s}{\phi}, \quad \quad W:=\frac{s\phi_r+r\phi_s}{\phi}.\]
	From $U$ and $W$, we get
	\begin{align}\label{4.6}
	\phi_s=\frac{U-s}{r^2-s^2}\phi, \quad \quad \phi_r=\frac{1}{s}\left(W-\frac{r(U-s)}{r^2-s^2}\right)\phi.
	\end{align}
	Substituting $\phi_r$ and $\phi_s$ into \eqref{defQ} and \eqref{defP} we have
	\begin{align*}
	P=-QU + \frac{W}{2r},
	\end{align*}
	and
	\begin{align*}
	Q=\frac{1}{2rs}\frac{2rU-2rs-2r^2W+s(r^2-s^2)W_s+s^2W+sUW}{U^2-sU+(r^2-s^2)U_s}.
	\end{align*}
	The two identities above give us
	\begin{align*}
	U&=\frac{(r^2-s^2)(sP_s-2P)-s(1+sP)}{(r^2-s^2)(2Q-sQ_s)-sP-1},\\\\
	W&=2r\left(P+\frac{(r^2-s^2)(sP_s-2P)-s(1+sP)}{(r^2-s^2)(2Q-sQ_s)-sP-1}Q\right).
	\end{align*}
	
	
	From \eqref{4.6}, one can see that $\phi$ should satisfy
	\begin{eqnarray}\label{4.9}\displaystyle
	\begin{cases} \displaystyle
	(\ln \phi)_r = \frac{1}{s}\left(W-\frac{r(U-s)}{r^2-s^2}\right),  \\ \\ \displaystyle
	(\ln \phi)_s=\frac{U-s}{r^2-s^2}.
	\end{cases}
	\end{eqnarray}
	From above, we have
	\begin{align*}
	(\ln \phi)_{rs}=&-\frac{1}{s^2}\left(2r(P+UQ)-\frac{r(U-s)}{r^2-s^2}\right) + \frac{1}{s}\left(2r(P_s+U_sQ+UQ_s)\right)\\ &-r\left(\frac{U_s-1}{r^2-s^2}+\frac{2s(U-s)}{(r^2-s^2)^2}\right)
	\end{align*}
	and
	\begin{align*}
	(\ln \phi)_{sr}=\frac{U_r}{r^2-s^2}-\frac{2r(U-s)}{(r^2-s^2)^2}.
	\end{align*}
	
	The system \eqref{4.9} has a solution if, and only if, $(\ln \phi)_{rs}=(\ln \phi)_{sr}.$ A straightforward computation of this gives us 
	\[s^2U_r=rs(2(r^2-s^2)Q-1)U_s +(r+2r(r^2-s^2)(sQ_s-Q))U + 2r(r^2-s^2)(sP_s-P),\] which is the condition (\ref{conditionPQ}).
	
	\hfill $\Box$
	
	\section{New families of Douglas and projectively flat metrics with scalar and constant flag curvatures}\label{section4}
	
	\

	In this section, we provide new families of spherically symmetric Douglas metrics based on Theorem \ref{teo1} and Theorem \ref{theo3}.
	
	\begin{example}\label{ex001}
		Considering $g=0$, $\eta(x)=
		\left(\frac{\sqrt{x}}{(1-x)^{\frac{3}{2}}}\right)^3 + \epsilon \frac{\sqrt{x}}{(1-x)^{\frac{3}{2}}}$ and $\eta=\eta(r^2-s^2)$ in Theorem \ref{teo1} we have 
		\begin{align*}\phi(r,s)=&\frac{35r^{10}-245r^8s^2+490r^6s^4-392r^4s^6+112r^2s^8-140r^8+700r^6s^2-910r^4s^4+336r^2s^6}{(1-(r^2-s^2))^{\frac{7}{2}}(1-r^2)(1+r^4-2r^2)}\\ &+\frac{16s^8+210r^6-630r^4s^2+350r^2s^4+56s^6-140r^4+140r^2s^2+70s^4+35r^2+35s^2}{(1-(r^2-s^2))^{\frac{7}{2}}(1-r^2)(1+r^4-2r^2)} \\
		&+\epsilon\frac{1-r^2+2s^2}{(1-r^2)\sqrt{1-(r^2-s^2)}}.
		\end{align*}
		So, the Finsler metric \[F(x,y)=|y|\phi(|x|,\frac{<x,y>}{|y|})\] defined on $B(0,1)\subset\mathbb{R}^{n}$ is a projectively flat Finsler metric with scalar flag curvature.
	\end{example}

	The next result presents a family of examples for Theorem \ref{teo1}.
	
	\begin{corollary}\label{examD1}
		Consider
		\begin{align*}
		\phi_m(r,s)=&sh(r)-\gamma\sum_{i=0}^{m} (-1)^i\binom{m}{i} r^{2i} e^{r^2} s \int s^{2(m-i-1)}e^{-s^2} ds\\
		&+\epsilon e^{r^2-s^2}(\sqrt{\pi}se^{-s^2}\operatorname{erf}(s) + 1), &  m=0,1,2,\ldots
		\end{align*}
		where $\gamma\geq 0$ and $\epsilon>0$ are real constants,  $h(r)$ is any differential function such that $\phi_m(r,s)$ is positive, $\operatorname{erf}(x)=\frac{2}{\sqrt{\pi}}\int_{0}^xe^{-t^2}dt$ is the Gauss error function (non-elementary).
		Then, for any $m\in\mathbb{N}$ the following Finsler metric on $M_s^n$ 
		\[ F(x,y)=|y|\phi_m\left(|x|,\frac{<x,y>}{|y|}\right)\]
		is a projectively flat Finsler metric with scalar flag curvature. Moreover, $F$ is a projectively flat Finsler metric.
	\end{corollary}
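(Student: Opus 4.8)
The plan is to recognize $\phi_m$ as the $g\equiv 0$ instance of Theorem~\ref{teo1} and then check regularity. First I would record the structural reduction. Setting $g\equiv 0$ forces $f\equiv 0$ through the relation $f=\tfrac{2g'+4rg^2}{r-2r^3g}$ used in the proof of Theorem~\ref{teo1}, so by \eqref{1.5} one gets $Q\equiv 0$, which by the background characterization ($f=g=0$) means $F$ is projectively flat. Moreover $Q\equiv 0$ makes every term of \eqref{R_2=0} vanish, so Lemma~\ref{lemmascalarK} grants scalar flag curvature at once (and projective flatness already implies scalar flag curvature in any case). Thus both assertions of Corollary~\ref{examD1} collapse to a single task: showing that $\phi_m$ defines a genuine Finsler metric whose $Q$ vanishes, equivalently that $\phi_m$ solves the linear equation $r\phi_{ss}-\phi_r+s\phi_{rs}=0$ hidden in \eqref{defQ} when $Q=0$.

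Second, I would generate $\phi_m$ from the general $g\equiv 0$ solution. With $g\equiv 0$ the quantities in \eqref{T} degenerate to $T\equiv 1$ and $\overline T\equiv 0$, so $\varphi=-(r^2-s^2)$ and the profile of \eqref{phiscalar} becomes $\phi=s\bigl(h(r)-\int \eta(r^2-s^2)/(s^2\sqrt{r^2-s^2})\,ds\bigr)$ with $\eta$ an arbitrary function of $r^2-s^2$, exactly as in Example~\ref{ex001}. The matching is then a linear computation in two pieces. For the $\gamma$-piece I would pull the $s$-integration inside the finite sum and apply the binomial identity $\sum_{i=0}^m(-1)^i\binom{m}{i}r^{2i}s^{2(m-i)}=(s^2-r^2)^m$ to collapse it into $-\gamma\, s\,e^{r^2}\!\int (s^2-r^2)^m e^{-s^2}s^{-2}\,ds$; this is precisely the integral in \eqref{phiscalar} for the choice $\eta(u)=\gamma(-1)^m u^{m+1/2}e^{u}$ with $u=r^2-s^2$. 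The error function enters through $\int s^{-2}e^{-s^2}\,ds=-e^{-s^2}/s-\sqrt{\pi}\,\operatorname{erf}(s)$ (integration by parts with $\int e^{-s^2}\,ds=\tfrac{\sqrt{\pi}}{2}\operatorname{erf}(s)$). For the $\epsilon$-piece I would identify the corresponding admissible $\eta$, evaluate its integral, and add the two contributions, absorbing constants into $h(r)$ to recover $\phi_m$.

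Third, I would verify the regularity conditions. By the Yu--Zhu criterion quoted in the proof of Theorem~\ref{teo1}, $F$ is Finsler if and only if $\phi_m>0$, $\phi_m-s(\phi_m)_s>0$, and $\phi_m-s(\phi_m)_s+(r^2-s^2)(\phi_m)_{ss}>0$ on $M_s^n$; in the present parametrization these are exactly the inequalities \eqref{etateo2} on the chosen $\eta$. Positivity of $\phi_m$ is the point where the hypotheses $\gamma\ge 0$, $\epsilon>0$ and the freedom in the function $h(r)$ are invoked.

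Finally, the step I expect to be the main obstacle is the $\epsilon$-term. The $\gamma$-sum collapses cleanly, and, as the $m=0$ case already reveals, its error-function contribution appears in the ``good'' combination $e^{r^2-s^2}+\sqrt{\pi}\,s\,e^{r^2}\operatorname{erf}(s)$, on which the operator $r(\,\cdot\,)_{ss}-(\,\cdot\,)_r+s(\,\cdot\,)_{rs}$ vanishes because the $\operatorname{erf}$ terms cancel against one another. The delicate point is to confirm that the $\epsilon$-correction likewise comes from an admissible $\eta(\varphi)$, hence solves the same transport equation and satisfies \eqref{etateo2}; unlike the routine binomial bookkeeping, this demands a careful accounting of the cancellations in the mixed derivatives, and it is also where the strong-convexity inequality of the Yu--Zhu criterion must be checked term by term.
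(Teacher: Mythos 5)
Your argument is essentially the paper's own proof: take $g=0$ (so $T\equiv 1$, $\overline{T}\equiv 0$, $Q\equiv 0$, giving projective flatness and, via Lemma~\ref{lemmascalarK}, scalar flag curvature) and apply Theorem~\ref{teo1} with $\eta(x)=\sqrt{x}\,[\gamma x^{m}+\epsilon]\,e^{x}$, collapsing the sum by the binomial expansion of $(r^2-s^2)^m$ and evaluating the integrals through the stated recursion and $\int s^{-2}e^{-s^2}\,ds=-e^{-s^2}/s-\sqrt{\pi}\operatorname{erf}(s)$, exactly as you outline. The one step you flag as delicate --- the $\epsilon$-term --- is in fact immediate, since it is just the $\eta(u)=\epsilon\sqrt{u}\,e^{u}$ (i.e.\ the $m=0$) member of the same family and so needs no separate transport-equation check, and your computed combination $\epsilon\,e^{r^2-s^2}+\epsilon\sqrt{\pi}\,s\,e^{r^2}\operatorname{erf}(s)$ correctly reveals that the $e^{-s^2}$ displayed inside the $\epsilon$-term of the corollary is a typo for $e^{s^2}$, consistent with the paper's subsequent $m=1,2$ examples.
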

	\begin{proof}
		Considering $g=0$, $\eta(r^2-s^2)=\sqrt{r^2-s^2}[(r^2-s^2)^m+\epsilon]e^{r^2-s^2}$, $\epsilon >0$. Then, the result follows from Theorem \ref{teo1} and a binomial expansion of $(r^2-s^2)^m$. In fact, the integral of the right-hand side of $\phi_{m}(r,\,s)$ may be expressed by 
		\begin{align*}
		\int s^{2(m-1)}e^{-s^2}ds&=-\frac{s^{2m-3}}{2}e^{-s^2}+ \frac{2m-3}{2}\int s^{2m-4}e^{-s^2}ds   &  m= 1,2 \ldots, 
		\end{align*}
		and
		\begin{align*}
		\int \frac{e^{-s^2}}{s^2}ds &= -\frac{e^{-s^2}}{s}[\sqrt{\pi}se^{s^2}(\operatorname{erf}(s))+ 1] + c& m=0, 
		\end{align*}
		where $\displaystyle\operatorname{erf}(x)=\frac{1}{\sqrt{\pi}}\int_{-x}^xe^{-t^2}dt$ is the Gauss error function (non-elementary)
		and $c\in \mathbb{R}$. 
	\end{proof}
	
	\begin{example}
		Taking $m=1$ in Corollary \ref{examD1} we have
		\[\phi(r,s)=sh(r)+e^{r^2-s^2}[\sqrt{\pi}se^{s^2}(r^2+\epsilon+\frac{1}{2})\operatorname{erf}(s)+r^2+\epsilon].\]
		Then, the following Finsler metric  \begin{align*}
		F(x,y)=|y|\phi\left(|x|,\frac{<x,y>}{|y|}\right)
		\end{align*} is a projectively flat Finsler metric with scalar flag curvature on $M_s^n.$  
	\end{example}

	\begin{example}
		
		For $m=2$ in Corollary \ref{examD1} we obtain
		\[\phi(r,s)=sh(r)+e^{r^2-s^2}[\sqrt{\pi}se^{s^2}(r^4+r^2+\epsilon-\frac{1}{4})\operatorname{erf}(s)+r^4+\frac{1}{2}s^2+\epsilon].\]
		Then, the following Finsler metric  \begin{align*}
		F(x,y)=|y|\phi\left(|x|,\frac{<x,y>}{|y|}\right)
		\end{align*} is a projectively flat Finsler metric with scalar flag curvature on $M_s^n.$  
	\end{example}

	Considering $g(r)=0$, $\eta(x)=\sqrt{x}(\gamma x^n\tanh^{-1}(x)+\epsilon)$ in Theorem \ref{teo1} from 
	\begin{align}
	\int s^{2(m-1)}\tanh^{-1}(r^2-s^2)ds=&-\frac{s^{2m-3}}{2}e^{-s^2}\nonumber\\
	&+ \frac{2m-3}{2}\int s^{2m-4}e^{-s^2}ds,&\mbox{for}\quad  m= 1,2 \ldots, \label{rec3}
	\end{align}
	and
	\begin{align}
	\int \frac{\tanh^{-1}(r^2-s^2)}{s^2}ds =& -\frac{\tanh^{-1}(r^2-s^2)}{s} + \frac{\tanh^{-1}\left(\frac{s}{\sqrt{r^2-1}}\right)}{\sqrt{r^2-1}}+\label{rec4}\\
	\nonumber    &-\frac{\tanh^{-1}\left(\frac{s}{\sqrt{r^2+1}}\right)}{\sqrt{r^2+1}}+ c& \mbox{for}\quad m=0, 
	\end{align}
	where $r>1$ and $c$ is constant, we get the corollary below. 
	
	\begin{corollary}\label{examD2}
		Let $\phi_m(r,s),$ $m\in\mathbb{N}$, be a function defined by
		\begin{align*}
		\phi_m(r,s)&=sh(r)-\gamma s\sum_{i=0}^{m}(-1)^i  \binom{m}{i} r^{2i} \int s^{2(m-i-1)}\tanh^{-1}(r^2-s^2)ds + \epsilon, 
		\end{align*}
		where $\gamma\geq 0$ and $\epsilon>0$ are real constants,  $h(r)$ is any differential function such that $\phi_{m}(r,s)$ is positive and the integral of the right-hand side is given by \eqref{rec3} and \eqref{rec4}. Then, the following Finsler metric   
		\[ F(x,y)=|y|\phi\left(|x|,\frac{<x,y>}{|y|}\right)\]
		defined on a symmetric space without the unit closed ball centered at the origin, is a Douglas metric with scalar flag curvature. Moreover, $F$ is a projectively flat Finsler metric.
	\end{corollary}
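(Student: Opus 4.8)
The plan is to read this off as the $g\equiv 0$ specialization of Theorem~\ref{teo1} with a concrete choice of the free function $\eta$, and then to carry out the resulting quadrature explicitly. First I would set $g(r)=0$ in the data of Theorem~\ref{teo1}: from \eqref{T} this gives $T(r)=1$ and $\overline{T}(r)=0$, so the integrating variable $\varphi(r,s)=\frac{r^2-s^2}{(r^2-s^2)\overline{T}(r)-T(r)}$ collapses to a constant multiple of $r^2-s^2$. Consequently one may legitimately prescribe $\eta$ as a function of $r^2-s^2$, and I would take
\[
\eta(r^2-s^2)=\sqrt{r^2-s^2}\,\bigl[\gamma(r^2-s^2)^m\tanh^{-1}(r^2-s^2)+\epsilon\bigr].
\]

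Next I would substitute this $\eta$ into the formula \eqref{phiscalar} for $\phi$. The factor $\sqrt{r^2-s^2}$ in $\eta$ cancels the $\sqrt{r^2-s^2}$ in the denominator of \eqref{phiscalar}, leaving
\[
\phi_m(r,s)=sh(r)-s\int\frac{\gamma(r^2-s^2)^m\tanh^{-1}(r^2-s^2)+\epsilon}{s^2}\,ds.
\]
I would split this into the $\epsilon$-part and the $\gamma$-part. The $\epsilon$-part is elementary, $-\epsilon s\int s^{-2}\,ds=\epsilon$ (up to a constant absorbed into $h$), producing the trailing $+\epsilon$. For the $\gamma$-part I would expand $(r^2-s^2)^m$ by the binomial theorem and interchange the finite sum with the integral, which yields exactly the sum $-\gamma s\sum_{i=0}^m(-1)^i\binom{m}{i}r^{2i}\int s^{2(m-i-1)}\tanh^{-1}(r^2-s^2)\,ds$ appearing in the statement; the individual integrals are then reduced to closed form by the recursion formulas \eqref{rec3} and \eqref{rec4}.

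The metric is of Douglas type and of scalar flag curvature automatically, since every $\phi$ produced by Theorem~\ref{teo1} has these properties. For the \emph{Moreover} clause I would observe that $g\equiv 0$ forces $f(r)=\frac{2g'(r)+4rg^2(r)}{r-2r^3g(r)}\equiv 0$, so that $Q=g+\tfrac{s^2}{2}f\equiv 0$ in \eqref{1.5}; by the criterion recorded after \eqref{1.5} ($f=g=0$), $F$ is then projectively flat, which is stronger than Douglas.

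The main obstacle, and the only place where real care is needed, is verifying the regularity inequalities \eqref{etateo2} that guarantee $F$ is a genuine Finsler metric, together with pinning down the admissible domain. With the chosen $\eta$ one computes $\eta/\sqrt{r^2-s^2}=\gamma(r^2-s^2)^m\tanh^{-1}(r^2-s^2)+\epsilon$ and $-\tfrac{\sqrt{r^2-s^2}}{s}\eta_s=2\sqrt{r^2-s^2}\,\eta'(r^2-s^2)$; both are positive exactly on the range $0<r^2-s^2<1$, since $\gamma\ge 0$, $\epsilon>0$, and $\tanh^{-1}$ together with its derivative are positive on $(0,1)$. This same constraint $|r^2-s^2|<1$ is precisely what makes $\tanh^{-1}(r^2-s^2)$, and hence formula \eqref{rec4} (valid for $r>1$), well defined, and is the reason the metric must be restricted to a region in the complement of the unit closed ball rather than to all of $\mathbb{R}^n$. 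Checking that these inequalities do hold on the stated domain is the crux; the binomial bookkeeping and the application of \eqref{rec3}--\eqref{rec4} are routine.
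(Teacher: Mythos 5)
Your proposal follows the paper's own route exactly: the paper derives Corollary~\ref{examD2} by taking $g(r)=0$ and $\eta(x)=\sqrt{x}\,\bigl(\gamma x^{m}\tanh^{-1}(x)+\epsilon\bigr)$ in Theorem~\ref{teo1} (the paper writes $x^{n}$, evidently a typo for $x^{m}$), expanding $(r^2-s^2)^m$ binomially, and reducing the resulting integrals via \eqref{rec3}--\eqref{rec4}, with the projective flatness clause coming from $f=g=0$ in \eqref{1.5}, just as you argue. If anything, you are slightly more thorough than the paper, since you also check the regularity inequalities \eqref{etateo2} and flag the domain restriction tied to $\tanh^{-1}$ and to $r>1$ in \eqref{rec4}, points the paper leaves implicit.
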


	\begin{example}
		For $m=0$ in Corollary \ref{examD2} we have
		\[\phi(r,s)=sh(r)+\gamma{\tanh^{-1}(r^2-s^2)} - \gamma s\frac{\tanh^{-1}\left(\frac{s}{\sqrt{r^2-1}}\right)}{\sqrt{r^2-1}}+\gamma s\frac{\tanh^{-1}\left(\frac{s}{\sqrt{r^2+1}}\right)}{\sqrt{r^2+1}} + \epsilon.\]
		Then, the following Finsler metric  \begin{align*}
		F(x,y)=|y|\phi\left(|x|,\frac{<x,y>}{|y|}\right)
		\end{align*} is a Douglas metric with scalar flag curvature. Furthermore, $F$ is a projectively flat Finsler metric.
	\end{example}
	
	From now on, we will consider $g(r)\neq 0.$ Inspired by Example \ref{ex001}, in Theorem \ref{teo1} we consider \[\varphi(r,s)=\frac{(r^2-s^2)}{T(r)-\overline{T}(r) (r^2-s^2)}\] and \[\eta(x)=\frac{\sqrt{x}}{(1-x)^{\frac{3}{2}}}\] 
	to determine the following family of examples.
	\begin{corollary}
		Let $g(r)$ be a smooth function such that $T(r)$ and $\overline{T}(r)$ in (\ref{T}) are well defined and   \[\phi(r,s)=sh(r)+\frac{(T(r)-r^2\overline{T}(r))(T(r)-(r^2-s^2)\overline{T}(r)) + s^2T(r)}{\sqrt{T(r)-(r^2-s^2)\overline{T}(r)}(T-r^2(\overline{T}(r)+1))^2}, \]
		where $h(r)$ is such that $\phi(r,s)$ is positive. Then, the following Finsler metric 
		\[ F(x,y)=|y|\phi\left(|x|,\frac{<x,y>}{|y|}\right)\]
		is a Douglas metric with scalar flag curvature. 
		
		\begin{remark}
			In the last corollary, if $T(r)=1$, $\overline{T}(r)=0$ and $h(r)=\pm\frac{ 2}{(1-r^2)^2}$, we obtain the projectively flat Berwald metric with vanishing flag curvature which was provided in \cite{berwald1929}: \[F(x,\,y)=\frac{(\sqrt{(1-|x|^2)|y|^2 +<x,y>^2} \pm <x,y>)^2}{(1-|x|^2)^2\sqrt{(1-|x|^2)|y|^2 +<x,y>^2}}.\]
		\end{remark}
	\end{corollary}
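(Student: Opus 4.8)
The plan is to read this corollary as a direct specialization of \thmref{teo1}: every spherically symmetric Douglas metric with scalar flag curvature is exactly of the form \eqref{finmet}--\eqref{phiscalar} for arbitrary $h$ and $\eta$ satisfying \eqref{etateo2}, so it suffices to feed in one admissible pair $(\eta,\varphi)$ and carry out the quadrature. Concretely, I would take $\eta(x)=\sqrt{x}\,(1-x)^{-3/2}$ together with the representation $\varphi(r,s)=\frac{r^2-s^2}{T(r)-\overline{T}(r)(r^2-s^2)}$; this is the invariant $\varphi$ of \eqref{phiscalar} up to an overall sign, which is harmless since $\eta$ is arbitrary. The whole argument then reduces to evaluating the integral in \eqref{phiscalar} for this choice, followed by a check that the resulting $\phi$ is admissible.

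First I would compute $\eta(\varphi)$ in closed form. Writing $1-\varphi=\frac{T-(\overline{T}+1)(r^2-s^2)}{T-\overline{T}(r^2-s^2)}$ and substituting into $\eta$ gives
\[
\eta(\varphi)=\frac{\sqrt{r^2-s^2}\,\bigl(T-\overline{T}(r^2-s^2)\bigr)}{\bigl(T-(\overline{T}+1)(r^2-s^2)\bigr)^{3/2}},
\]
so the integrand of \eqref{phiscalar} collapses to the elementary expression
\[
\frac{\eta(\varphi)}{s^2\sqrt{r^2-s^2}}=\frac{T-\overline{T}(r^2-s^2)}{s^2\,\bigl(T-(\overline{T}+1)(r^2-s^2)\bigr)^{3/2}}.
\]
Treating $T=T(r)$ and $\overline{T}=\overline{T}(r)$ as constants in the $s$--integration, this is a rational function times a $(-3/2)$--power of the quadratic $C+\mu s^2$, where $\mu=\overline{T}+1$ and $C=T-r^2(\overline{T}+1)$. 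I would evaluate it with the two reductions $\int (C+\mu s^2)^{-3/2}\,ds=\frac{s}{C\sqrt{C+\mu s^2}}$ and $\int s^{-2}(C+\mu s^2)^{-3/2}\,ds=-\frac{C+2\mu s^2}{C^2 s\sqrt{C+\mu s^2}}$, then insert the outcome into $\phi=s\bigl(h(r)-\int\frac{\eta(\varphi)}{s^2\sqrt{r^2-s^2}}\,ds\bigr)$ and simplify to the displayed closed form.

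It then remains to confirm that $F=|y|\phi$ is genuinely a Finsler metric, since by \thmref{teo1} that is the only thing left to guarantee a Douglas metric of scalar flag curvature. A short computation gives $\varphi_s=-2sT/B^2$ with $B=T-\overline{T}(r^2-s^2)$, so that $\frac{-\sqrt{r^2-s^2}}{s}\partial_s\eta(\varphi)=\frac{2T\sqrt{r^2-s^2}}{B^2}\,\eta'(\varphi)$; since $\eta(x)=\sqrt{x}(1-x)^{-3/2}$ is positive and strictly increasing on $(0,1)$, the two sign conditions in \eqref{etateo2} reduce to $T(r)>0$ together with $0<\varphi<1$, i.e. $0<r^2-s^2<T-\overline{T}(r^2-s^2)$ after clearing denominators. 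This is precisely what pins down $M_s^n$ and the admissible $g$, while positivity of $\phi$ itself is arranged by the free radial factor $h(r)$, as assumed in the statement.

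The main obstacle is the bookkeeping in the second step: the antiderivative of $s^{-2}(C+\mu s^2)^{-3/2}$ must be produced correctly (an undetermined–coefficient ansatz $\frac{\alpha+\beta s^2}{s\sqrt{C+\mu s^2}}$ is the quickest route), and the algebra that re-expresses the result through the quantities $T-r^2\overline{T}$, $T-(r^2-s^2)\overline{T}$ and $T-r^2(\overline{T}+1)$ is where an error is easiest to make. Specializing to $g\equiv 0$, so that $T\equiv 1$ and $\overline{T}\equiv 0$, and matching against Example~\ref{ex001} is a useful consistency check on this simplification; the accompanying Berwald case ($T\equiv 1$, $\overline{T}\equiv 0$, $h(r)=\pm 2/(1-r^2)^2$) then follows by one further direct substitution.
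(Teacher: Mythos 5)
Your proposal is correct and is essentially the paper's own (implicit) proof: the corollary is obtained exactly as you describe, by substituting $\eta(x)=\frac{\sqrt{x}}{(1-x)^{3/2}}$ and $\varphi(r,s)=\frac{r^2-s^2}{T(r)-\overline{T}(r)(r^2-s^2)}$ into Theorem~\ref{teo1} and carrying out the $s$-quadrature with precisely the two reductions you list (both of which I verified by differentiation), followed by the admissibility check via \eqref{etateo2}. One remark: performing your integration to the end gives $\phi(r,s)=sh(r)+\frac{\bigl(T-r^2(\overline{T}+1)\bigr)\bigl(T-\overline{T}(r^2-s^2)\bigr)+2Ts^2}{\bigl(T-r^2(\overline{T}+1)\bigr)^2\sqrt{T-(\overline{T}+1)(r^2-s^2)}}$, which correctly reproduces the Berwald metric under the remark's specialization $T\equiv 1$, $\overline{T}\equiv 0$, $h=\pm\frac{2}{(1-r^2)^2}$ (the paper's displayed expression does not), so your proposed consistency check against the Berwald case would in fact expose typos in the paper's stated formula rather than a flaw in your argument.
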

	Now, considering in Theorem \ref{teo1} \begin{align}\label{etaex}
	\eta(x)=\sqrt{-x}[(-x)^m+\epsilon],
	\end{align}
	for $\epsilon>0$ we have the next two corollaries considering $m=1$ and $m=2.$
	\begin{corollary}\label{corn1}
		Let $g(r)$ be a smooth function such that $T(r)$ and $\overline{T}(r)$ in (\ref{T}) are well defined and
		\begin{align*}
		\phi(r,s)=&sh(r) +\displaystyle\gamma\frac{r^2\sqrt{T(r)-\overline{T}(r)(r^2-s^2)}}{(T(r)-r^2\overline{T}(r))^2}  + \gamma s^2\frac{T(r)}{(T(r)-r^2\overline{T}(r))^2\sqrt{T(r)-\overline{T}(r)(r^2-s^2)}}
		\\
		&+\epsilon\frac{\sqrt{T(r)-\overline{T}(r)(r^2-s^2)}}{T(r)-r^2\overline{T}(r)}.
		\end{align*}
		Then, the following Finsler metric  
		\[ F(x,y)=|y|\phi\left(|x|,\frac{<x,y>}{|y|}\right)\]
		is a Douglas metric with scalar flag curvature.
	\end{corollary}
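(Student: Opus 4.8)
The plan is to read the result off Theorem~\ref{teo1} by substituting the explicit choice $\eta(x)=\sqrt{-x}\,[\gamma(-x)^m+\epsilon]$ of \eqref{etaex} with $m=1$ and carrying out the single $s$-integration that defines $\phi$ in \eqref{phiscalar}. Because $g(r)$ is assumed to make $T(r)$ and $\overline{T}(r)$ well defined, Theorem~\ref{teo1} already guarantees that the resulting metric is Douglas with scalar flag curvature for \emph{any} admissible $\eta$; hence the content of the corollary is only (i) to evaluate that integral for the given $\eta$ and obtain the stated closed form, and (ii) to check the Finsler inequalities \eqref{etateo2}. Using the sign convention of Theorem~\ref{teo1}, for which $\varphi=\tfrac{r^2-s^2}{(r^2-s^2)\overline{T}-T}$, I would first record that $-\varphi=\tfrac{r^2-s^2}{T-\overline{T}(r^2-s^2)}>0$ on the domain, so that for $m=1$ the integrand datum is real, namely $\eta(\varphi)=\gamma(-\varphi)^{3/2}+\epsilon(-\varphi)^{1/2}$.

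The key simplification is that, as a function of $s$, the quantity $E(r,s):=T-\overline{T}(r^2-s^2)=(T-r^2\overline{T})+\overline{T}\,s^2$ is a quadratic in $s$ whose coefficients $\alpha:=T-r^2\overline{T}$ and $\overline{T}$ are independent of $s$. Substituting $-\varphi=(r^2-s^2)/E$ into $\tfrac{\eta(\varphi)}{s^2\sqrt{r^2-s^2}}$, the half-integer powers of $r^2-s^2$ cancel and the integrand collapses to $\tfrac{\gamma(r^2-s^2)}{s^2E^{3/2}}+\tfrac{\epsilon}{s^2E^{1/2}}$. Each summand is then a standard rational-in-$s$ integral against a power of $\alpha+\overline{T}s^2$, which I would evaluate using $\int \tfrac{ds}{s^2\sqrt{\alpha+\overline{T}s^2}}=-\tfrac{\sqrt{\alpha+\overline{T}s^2}}{\alpha s}$, $\int\tfrac{ds}{(\alpha+\overline{T}s^2)^{3/2}}=\tfrac{s}{\alpha\sqrt{\alpha+\overline{T}s^2}}$, and the reduction $\int\tfrac{ds}{s^2(\alpha+\overline{T}s^2)^{3/2}}=-\tfrac{\alpha+2\overline{T}s^2}{\alpha^2 s\sqrt{\alpha+\overline{T}s^2}}$.

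After multiplying the antiderivative by $s$ (recall $\phi=s(h(r)-\int\cdots)$) and collecting over the common denominator $\alpha^2\sqrt{E}$, the $\gamma$-numerator becomes $r^2\alpha+2r^2\overline{T}s^2+\alpha s^2$; using $\alpha=T-r^2\overline{T}$ and $E=\alpha+\overline{T}s^2$ one verifies the algebraic identity $r^2\alpha+2r^2\overline{T}s^2+\alpha s^2=r^2E+s^2T$, which regroups exactly into $\gamma\tfrac{r^2\sqrt{E}}{\alpha^2}+\gamma\tfrac{s^2T}{\alpha^2\sqrt{E}}$, while the $\epsilon$-piece yields $\epsilon\tfrac{\sqrt{E}}{\alpha}$. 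This reproduces the claimed $\phi$. This matching identity, together with tracking the constants of integration (which are absorbed into $h(r)$), is the main place where care is required; the rest of the paragraph is routine calculus.

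Finally I would check \eqref{etateo2}. Since $\gamma,\epsilon>0$ and $-\varphi>0$ we have $\eta>0$, which gives the second inequality $\tfrac{\eta}{\sqrt{r^2-s^2}}>0$. For the first, differentiating $-\varphi=(r^2-s^2)/E$ and using $E+\overline{T}(r^2-s^2)=T$ gives $\partial_s(-\varphi)=-2sT/E^2$, while $\tfrac{d\eta}{d(-\varphi)}=\tfrac12(-\varphi)^{-1/2}\bigl(3\gamma(-\varphi)+\epsilon\bigr)>0$; hence $\tfrac{-\sqrt{r^2-s^2}}{s}\,\eta_s$ equals a positive factor times $\tfrac{2T\sqrt{r^2-s^2}}{E^2}$, which is positive because $T(r)>0$ by its definition in \eqref{T}. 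Thus both Finsler conditions hold, and Theorem~\ref{teo1} then yields that $F$ is a Douglas metric with scalar flag curvature. The only genuine obstacle is the bookkeeping of the second and third paragraphs; everything else is a direct appeal to Theorem~\ref{teo1}.
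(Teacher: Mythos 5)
Your proposal is correct and follows essentially the same route as the paper, which states Corollary~\ref{corn1} as a direct consequence of Theorem~\ref{teo1} with the choice $\eta(x)=\sqrt{-x}\,[(-x)^m+\epsilon]$ of \eqref{etaex} at $m=1$ (the paper leaves the $s$-integration and the verification of \eqref{etateo2} implicit, and you have carried both out correctly, including the identity $r^2\alpha+2r^2\overline{T}s^2+\alpha s^2=r^2E+s^2T$ that regroups the antiderivative into the stated form). Your insertion of the factor $\gamma$ into $\eta$, which the paper omits in \eqref{etaex} but uses in the corollary, is the right reading of the authors' intent.
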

	\begin{example}
		Considering $g(r)=\frac{1}{2}$ and $h(r)=0$ in Corollary \ref{corn1}, we obtain  $T(r)=\overline{T}(r)=\frac{1}{1-r^2},$   and then 
		\[F(x,y)= \gamma\frac{(1-|x|^2)(|x|^2|y|^2+<x,y>^2) + 2<x,y>^2}{\sqrt{1-|x|^2}\sqrt{(1-|x|^2)|y|^2-<x,y>^2}} +\epsilon\frac{\sqrt{(1-|x|^2)|y|^2+<x,y>^2}}{\sqrt{1-|x|^2}}  \]
		defined on $TB(0,1)$ is a non-projectively flat Douglas metric with scalar flag curvature.
	\end{example}
	
	\begin{corollary}\label{corn2}
		Let $g(r)$ be a smooth function such that $T(r)$ and $\overline{T}(r)$ are well defined, $\epsilon >0$ and 
		\begin{align*}
		\phi(r,s)=&sh(r) +\frac{\left[(T(r)-r^2\overline{T}(r))(r^2-s^2)\right]^2 - \frac{4}{3} s^4T^2(r)}{(T(r)-r^2\overline{T}(r))^3(T(r)-(r^2-s^2)\overline{T}(r))} + \epsilon\frac{\sqrt{T(r)-\overline{T}(r)(r^2-s^2)}}{T(r)-r^2\overline{T}(r)}
		\end{align*}
		Then, the following Finsler metric  
		\[ F(x,y)=|y|\phi\left(|x|,\frac{<x,y>}{|y|}\right)\]
		is a Douglas metric with scalar flag curvature. 
	\end{corollary}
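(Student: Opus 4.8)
The plan is to obtain Corollary \ref{corn2} as the $m=2$ instance of the family \eqref{etaex}, in exact parallel with Corollary \ref{corn1} (the $m=1$ case), by specializing Theorem \ref{teo1}. I would choose $g(r)\neq 0$ so that $T(r)$ and $\overline{T}(r)$ in \eqref{T} are well defined, and take $\eta(x)=\sqrt{-x}\,[(-x)^{2}+\epsilon]$ with $\epsilon>0$. By Theorem \ref{teo1}, for \emph{any} admissible pair $(h,\eta)$ the metric $F=|y|\phi(r,s)$ built from \eqref{phiscalar} is automatically a spherically symmetric Douglas metric of scalar flag curvature; hence the only two remaining tasks are (i) to evaluate the integral in \eqref{phiscalar} explicitly for this $\eta$ so as to arrive at the displayed closed form of $\phi$, and (ii) to verify the admissibility inequalities \eqref{etateo2} so that $F$ is a genuine Finsler metric.

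For step (i) I would write $\Theta:=T(r)-\overline{T}(r)(r^{2}-s^{2})$, so that the specialized $\varphi(r,s)=\tfrac{r^{2}-s^{2}}{\Theta}$. The computational key is the identity $\tfrac{\sqrt{-\varphi}}{\sqrt{r^{2}-s^{2}}}=\tfrac{1}{\sqrt{-\Theta}}$, which collapses the integrand of \eqref{phiscalar} to
\[
\frac{\eta(\varphi)}{s^{2}\sqrt{r^{2}-s^{2}}}=\frac{(r^{2}-s^{2})^{2}}{s^{2}(-\Theta)^{5/2}}+\frac{\epsilon}{s^{2}\sqrt{-\Theta}}.
\]
Since $\Theta$ is affine in $s^{2}$, both pieces reduce, after the substitution $u=s^{2}$ (or directly via the elementary antiderivatives $\int\frac{ds}{s^{2}\sqrt{a+bs^{2}}}$ and $\int\frac{s^{k}\,ds}{(a+bs^{2})^{p/2}}$), to a finite combination of (half-integer) powers of $\Theta$ times rational functions of $s$. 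Multiplying the outcome by $-s$, adding $s\,h(r)$, and collecting terms yields the stated $\phi(r,s)$: the $\epsilon$-summand comes entirely from the second integral above, while the first produces the rational block carrying the denominator $(T-r^{2}\overline{T})^{3}\,\Theta$.

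Step (ii) is short. Because $\epsilon>0$ and $-\Theta>0$ on the relevant domain, one has $\tfrac{\eta(\varphi)}{\sqrt{r^{2}-s^{2}}}=\tfrac{(-\varphi)^{2}+\epsilon}{\sqrt{-\Theta}}>0$, which is the second inequality in \eqref{etateo2}; the first, $-\tfrac{\sqrt{r^{2}-s^{2}}}{s}\,\eta_{s}>0$, follows from $\eta_{s}=\eta'(\varphi)\,\varphi_{s}$ once one checks that $\eta'>0$ (clear, as $\eta$ is increasing in $-x$) and that $\varphi_{s}$ has the expected sign, an elementary computation in terms of $g$, $T$ and $\overline{T}$.

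I expect the main obstacle to be purely the bookkeeping of step (i): for $m=2$ one integrates a $(-\Theta)^{-5/2}$-weighted rational function, so several half-integer-power terms appear and must cancel and combine into the compact closed form, and the sign conventions surrounding $\sqrt{-\Theta}$ versus $\sqrt{\Theta}$ must be tracked consistently throughout. No new idea beyond Theorem \ref{teo1} is required; the entire content is the explicit antiderivative and its simplification.
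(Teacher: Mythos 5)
Your proposal coincides with the paper's own (implicit) proof: Corollary \ref{corn2} is obtained there precisely by taking $m=2$ in \eqref{etaex} and invoking Theorem \ref{teo1}, with the closed form coming from the elementary antiderivatives of $s^{-2}\Theta^{-5/2}$-type integrands that you describe (here $\Theta:=T(r)-\overline{T}(r)(r^2-s^2)$ is affine in $s^2$) and the constant of integration absorbed into $s\,h(r)$, followed by the check of \eqref{etateo2}. One small correction to your bookkeeping: from \eqref{phiscalar} the specialized argument is $\varphi=-(r^2-s^2)/\Theta$, which is negative on the domain where $\Theta>0$, so $\eta(\varphi)=\sqrt{-\varphi}\,[(-\varphi)^2+\epsilon]$ produces $\sqrt{\Theta}$ directly (matching the $\epsilon$-term of the stated $\phi$) and the $\sqrt{-\Theta}$ versus $\sqrt{\Theta}$ tracking you anticipate never arises; otherwise your route is exactly the paper's.
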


	Considering $m=0$ in \eqref{etaex} we obtain  \[\phi(r,s)=sh(r)+\frac{\sqrt{T(r)-\overline{T}(r)(r^2-s^2)}}{T(r)-r^2\overline{T}(r)}.\] The next two examples have this same structure.
	Here we have some examples considering $m=3,4, \ldots$ in (\ref{etaex}).

	\begin{example}\label{exs1}
		Considering $g(r)=-\frac{1}{r}$, $\eta(\varphi)=\sqrt{\varphi}$ in Theorem \ref{teo1}, we have \[T(r)=1, \quad \overline{T}(r)=-\frac{4}{r} \quad \text{ and } \quad \varphi(r,s)=\frac{r(r^2-s^2)}{r+4(r^2-s^2)}.\] 
		Assuming those identities in \eqref{phiscalar} we obtain the Finsler metric
		\[F:=h(|x|)<x,y> + c\frac{\sqrt{|x|^2|y|^2 + 4|x|(|x|^2|y|^2-<x,y>^2)}}{|x|(1+4|x|)},\]
		defined on $M_s^n\backslash\{0\}$ which is of Douglas type with scalar flag curvature.
	\end{example}

	\begin{example}\label{example02}
		Considering $h(r)=0$ in the last example we can prove that $R_3=0.$ Then,  the Finsler metric
		\[F:= c\frac{\sqrt{|x|^2|y|^2 + 4|x|(|x|^2|y|^2-<x,y>^2)}}{|x|(1+4|x|)}\]
		defined on $M_s^n\backslash\{0\}$ is of Douglas type with constant flag curvature $K=-\frac{4}{c^2}$.
	\end{example}
	
	\begin{example}
		Considering $h(r)=\frac{2c}{(1+2r)(1+4r)}$ in Example \ref{exs1} we can see that $R_3=0$. Therefore, the Finsler metric
		\[F:= c\frac{(2|x|+1)\sqrt{|x|^2|y|^2 + 4|x|(|x|^2|y|^2-<x,y>^2)}+2\sqrt{|x|}<x,y>}{|x|(1+2|x|)(1+4|x|)}\]
		defined on $M_s^n\backslash\{0\}$ is of Douglas type with constant flag curvature $K=-\frac{1}{c^2}$.
	\end{example}

	\begin{example}
		Considering $g(r)=-2$, $\eta(\varphi)=\sqrt{\varphi}$ in Theorem \ref{teo1}, we have \[T(r)=1, \quad \text{ and } \quad \varphi(r,s)=\frac{r(r^2-s^2)}{r+4(r^2-s^2)}.\] Assuming this in \eqref{phiscalar} we can infer that
		\[F:=h(r)<x,y> + c\sqrt{\frac{{1+4|x|^2}}{|y|^2+4(|x|^2-<x,y>^2)}}\]
		defined on $M_s^n\backslash\{0\}$ is a Finsler metric of Douglas type with scalar flag curvature.
	\end{example}

	\begin{example}\label{ex10}
		Applying the method described after Theorem \ref{theo3} give us the examples provided in \cite{2012arXiv1202.4543M}. In fact, from the same function $\phi$ given by Example \ref{exs1} we can rewrite (see Remark \ref{remark01}) $P:=\phi=h(r)s+\frac{c\sqrt{r(r+4(r^2-s^2))}}{r(1+4r)}$. Thus, from \eqref{conditionPQ} and \eqref{condU} we get the following examples:
		\[F:=|y|\frac{(2r+1)^2}{(4r+1)^{3/2}}\operatorname{exp}\left(\int_1^s \frac{\pm 4r(r+4r^2-2s^2)-4s(1+2r)\sqrt{r(r+4(r^2-s^2))}}{\left(r+4(r^2-s^2)\right)\left(\pm 2rs+(1+2r)\sqrt{r(r+4(r^2-s^2))}\right)}ds\right)\]
		and
		\[F=|y|\sqrt{\frac{1}{4r+1}\pm \frac{4\sqrt{r(r+4(r^2-s^2))}s}{r(2r+1)(4r+1)^2} - \frac{4(4r^2+3r+1)}{r(2r+1)^2(4r+1)^2}s^2}\]
		defined on $M_s\backslash\{0\}$.
	\end{example}

\end{document}